\newcommand{\R}{{\mathbb R}}
\newcommand{\N}{{\mathbb N}}
\newtheorem{opr}{Definition}
\newtheorem{thm}{Theorem}
\newtheorem{lem}[thm]{Lemma}
\newtheorem{cor}[thm]{Corollary}
\date{}
\newtheorem{prop}[thm]{Proposition}
\newtheorem{defn}{Definition}
\title{Erd\H os-Ko-Rado theorem for $\{0,\pm 1\}$-vectors}
\author{Peter Frankl, Andrey Kupavskii\footnote{Moscow Institute of Physics and Technology, Ecole Polytechnique F\'ed\'erale de Lausanne; Email: {\tt kupavskii@yandex.ru} \ \ Research supported by the grant RNF~16-11-10014.}}
\date{}
\begin{document}
\maketitle
\begin{abstract}The main object of this paper is to determine the maximum number of $\{0,\pm 1\}$-vectors subject to the following condition. All vectors have length $n$, exactly $k$ of the coordinates are $+1$ and one is $-1$, $n \geq 2k$. Moreover, there are no two vectors whose scalar product equals the possible minimum, $-2$. Thus, this problem may be seen as an extension of the classical Erd\H os-Ko-Rado theorem. Rather surprisingly there is a phase transition in the behaviour of the maximum at $n=k^2$. Nevertheless, our solution is complete. The main tools are from extremal set theory and some of them might be of independent interest.
\end{abstract}

\section{Introduction}
Let $[n] = \{1,\ldots, n\}$. Then any subset of the power set $2^{[n]}$ is called a family of subsets, or family for short. Another way of looking at families is by associating with a set $R\subset [n]$ its characteristic vector $v(F) = (x_1,\ldots, x_n)$ with $x_i = 1$ for $i\in F$ and $x_i = 0$ for $i\notin F$.

This association of a family $\mathcal F$ with a family of vectors $\mathcal V = \{v(F): F\in \mathcal F\}$ provides a fruitful connection between some geometric problems concerning $\R^n$ and families of subsets with restrictions on sizes of pairwise intersections. The by now classic result of Frankl and Wilson \cite{FW} is a good example. Kahn and Kalai \cite{KK} gave it a twist to deduce counterexamples to the famous Borsuk conjecture.

Raigorodskii \cite{Rai1} succeeded in improving the bounds in geometric applications by enlarging the scope of vectors from $\{0,1\}$-vectors to $\{0,\pm 1\}$-vectors. Some further applications of $\{0,\pm 1\}$-vectors in such questions one may find in \cite{ChR}, \cite{K}, \cite{MR}, \cite{PR}, \cite{Rai2}, \cite{Rai3}.

Extremal problems for $\{0,\pm 1\}$-vectors were considered before (cf. \cite{DF}), but no systematic investigation happened so far. The aim of the present paper is to consider extending the classical Erd\H os-Ko-Rado Theorem \cite{EKR} to this setting. This direction of research was proposed to us by Raigorodskii. Before stating the main results, let us introduce some definitions.

\begin{opr}\label{def1} For $0\le l,k<n$ define $\mathcal V(n,k,l) \subset \R^n$ to be the set of all $\{0,\pm 1\}$-vectors having exactly $k$ coordinates equal $+1$ and $l$ coordinates equal $-1$.
 \end{opr}
 We tacitly assume that $n\ge k+l$ and also $k\ge l$. Indeed, for $l>k$ one can replace a family $\mathcal V$ of vectors by $-\mathcal V = \{-v:v\in \mathcal V\}$ and have the number of $+1$'s prevail. Note that
\begin{equation}\label{eq00}
|\mathcal V(n,k,l)| = {n\choose k}{n-k\choose l}.
\end{equation}
With this notation families of $k$-sets are just subsets of $\mathcal V(n,k,0)$.

For vectors $v,w$ their \textit{scalar product} is denoted by $\langle v,w\rangle$. If both $v$ and $w$ are $\{0,1\}$-vectors, then their scalar product is non-negative with $\langle v(F),v(G)\rangle = 0$ iff $F\cap G = \emptyset$.

\begin{opr}\label{def2} A family $\mathcal F\subset 2^{[n]}$ is called \textit{intersecting}, if $F\cap G\ne \emptyset$ holds for all $F,G\in \mathcal F$. \end{opr}
For completeness let us state the Erd\H os-Ko-Rado Theorem.

\begin{thm}[\cite{EKR}]\label{thmekr} Suppose that $n\ge 2k>0$ and $\mathcal F\subset {[n]\choose k}$ is intersecting. Then
\begin{equation}\label{eq01} |\mathcal F|\le {n-1\choose k-1}.\end{equation}
\end{thm}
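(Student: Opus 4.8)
The plan is to prove Theorem~\ref{thmekr} by Katona's circle (cyclic permutation) method, which gives the bound \eqref{eq01} with essentially no computation. Consider all $(n-1)!$ cyclic orderings of the ground set $[n]$; in a fixed cyclic ordering, call a block of $k$ consecutive elements an \emph{arc}. Two counting facts are immediate: each cyclic ordering has exactly $n$ arcs, and each fixed $k$-set $F\subset[n]$ is an arc in exactly $k!\,(n-k)!$ of the cyclic orderings (glue the elements of $F$ into a single super-element, order the resulting $n-k+1$ objects cyclically in $(n-k)!$ ways, then order $F$ internally in $k!$ ways).

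The combinatorial core is the claim that, \emph{if $n\ge 2k$, any intersecting family of arcs inside one fixed cyclic ordering has at most $k$ members.} To prove it, I would label the arcs $A_1,\dots,A_n$ with $A_j=\{j,j+1,\dots,j+k-1\}$ (indices mod $n$); if the intersecting family is nonempty, pick one of its members and relabel so it is $A_1=\{1,\dots,k\}$. An arc $A_i$ with $i\ne 1$ meets $A_1$ only if $i\in\{2,\dots,k\}\cup\{n-k+2,\dots,n\}$, and I would pair up these $2k-2$ arcs as $\{A_{1+t},\,A_{n-k+1+t}\}$ for $t=1,\dots,k-1$. The key point is that each such pair consists of two \emph{disjoint} arcs: $A_{1+t}=\{1+t,\dots,k+t\}$ has no wraparound since $k+t\le 2k-1<n$, while $A_{n-k+1+t}$ occupies $\{n-k+1+t,\dots,n\}\cup\{1,\dots,t\}$, and these are disjoint precisely because $n-k+1+t>k+t$, i.e. because $n\ge 2k$. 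Hence an intersecting family can contain $A_1$ together with at most one arc from each of the $k-1$ pairs, for a total of at most $k$.

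Granting the claim, the theorem follows by double counting the incidences $(\sigma,F)$ with $\sigma$ a cyclic ordering and $F\in\mathcal F$ an arc of $\sigma$. Summing over $\sigma$ first, the claim allows at most $k$ such $F$ per $\sigma$, so the number of incidences is at most $k\,(n-1)!$. Summing over $F$ first, each $F\in\mathcal F$ is an arc in exactly $k!\,(n-k)!$ orderings, so the number of incidences equals $|\mathcal F|\,k!\,(n-k)!$. Comparing the two counts gives
\[
|\mathcal F|\le \frac{k\,(n-1)!}{k!\,(n-k)!}=\frac{(n-1)!}{(k-1)!\,(n-k)!}=\binom{n-1}{k-1}.
\]

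The only real obstacle is the bookkeeping in the claim — correctly identifying which arcs meet $A_1$ and verifying that the disjoint pairing works, which is exactly where the hypothesis $n\ge 2k$ is used — but this is elementary once the cyclic indices are set up carefully. (Alternative routes exist, such as the original Erd\H os--Ko--Rado argument via the shifting/compression operation or a derivation from the Kruskal--Katona theorem, but the circle method is the shortest self-contained proof of \eqref{eq01}.)
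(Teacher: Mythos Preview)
Your proof is correct; it is the standard Katona circle-method argument, and the bookkeeping (the pairing of arcs and the use of $n\ge 2k$ to guarantee disjointness) is carried out cleanly.

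Note, however, that the paper does not actually give its own proof of Theorem~\ref{thmekr}: it merely states the result with a citation to \cite{EKR} and uses it as background. That said, the paper does develop exactly the machinery you use --- the general form of Katona's circle method appears as Lemma~\ref{lem66}, the cyclic families $\mathcal U(\pi,k)$ are introduced in Section~\ref{sec322}, and the connection to Kneser graphs is spelled out at the end of Section~\ref{sec43} --- so your argument is entirely in the spirit of the paper's toolkit and could be reconstructed from those pieces with one extra line (namely the bound $\alpha(G|_{\mathcal U(\pi,k)})\le k$, which is essentially your ``combinatorial core'' claim and also appears implicitly inside the proof of Proposition~\ref{prop2}). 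There is nothing to compare against beyond this; your write-up simply fills in a proof the paper chose to omit.
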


For $n\ge 2k$ the minimum possible scalar product in $\mathcal V(n,k,l)$ is $-2l$. Two vectors achieve this iff the $-1$ coordinates in each of them match $+1$ coordinates in the other, and no two $+1$-coordinates have the same position.

\begin{opr}\label{def3} A family $\mathcal V$ of vectors is called \textit{intersecting}, if the scalar product of any two vectors in $\mathcal V$ exceeds the minimum of the scalar product in $\mathcal V(n,k,l)$.
\end{opr}
By analogy with the Erd\H os-Ko-Rado Theorem define
$$m(n,k,l) = \{\max|\mathcal V|: \mathcal V\subset \mathcal V(n,k,l), \mathcal V\text{ is intersecting}\}.$$

 With this terminology the Erd\H os-Ko-Rado Theorem can be stated as
$$m(n,k,0) = {n-1\choose k-1}\text{\ \ \ for \ } n\ge 2k.$$

The present paper is mostly devoted to the complete determination of $m(n,k,1)$. The surprising fact is that the situation is very different from the case $l = 0$, the Erd\H os-Ko-Rado Theorem. Namely, for $n>k^2$ the Erd\H os-Ko-Rado-type construction is no longer optimal.

\begin{opr}\label{def4} For $n\ge 2k$ define $\mathcal E(n,k,1) = \{(x_1,\ldots, x_n)\in \mathcal V(n,k,1): x_1 = 1\}$.\end{opr}
Clearly, $\mathcal E(n,k,1)$ is intersecting with
\begin{equation}\label{eq02}|\mathcal E(n,k,1) = k{n-1\choose k}.\end{equation}
\begin{thm}\label{thmsmall} For $2k\le n\le k^2$
\begin{equation}\label{eq03}m(n,k,1) = k{n-1\choose k} \text{\ \ \ \ holds.}\end{equation}
\end{thm}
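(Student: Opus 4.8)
The plan is to reduce the problem about $\{0,\pm1\}$-vectors to a statement about set systems and then apply Erd\H os--Ko--Rado-type arguments together with a counting/shifting step. A vector in $\mathcal V(n,k,1)$ is naturally encoded as a pair $(A,x)$ where $A\in\binom{[n]}{k}$ is the support of the $+1$'s and $x\in[n]\setminus A$ is the position of the $-1$. Two such pairs $(A,x),(B,y)$ have scalar product $-2$ exactly when $x\in B$ and $y\in A$ (and, automatically, the $+1$-supports then overlap in those coordinates). So $\mathcal V$ is intersecting iff there is no \emph{ordered} pair of members $(A,x),(B,y)$ with $x\in B$ \emph{and} $y\in A$; equivalently, for every two members, at least one of the two ``pointer lies in the other support'' conditions fails. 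I would first record this combinatorial reformulation cleanly, and note that $\mathcal E(n,k,1)$ corresponds to fixing coordinate $1$ to be a $+1$, which trivially avoids the bad configuration because $x\ne 1$ for all members forces... wait, more carefully: in $\mathcal E(n,k,1)$ every $A$ contains $1$, so if $(A,x),(B,y)$ were bad we'd need $y\in A$ and $x\in B$, which is possible — so the reason $\mathcal E$ works is subtler and I should instead observe that $\mathcal E(n,k,1)$ is intersecting because... actually one should double-check: take $A=\{1,2\},x=3$ and $B=\{1,3\},y=2$ with $k=2$; then $x=3\in B$ and $y=2\in A$, giving scalar product $-2$. So $\mathcal E(n,k,1)$ as defined is \emph{not} simply ``first coordinate is $+1$'' being intersecting for free — I must use the definition literally and re-examine; presumably the intended extremal family is ``$x_1=+1$ AND the $-1$ is not at a specially structured place,'' or the count $k\binom{n-1}{k}$ suggests: choose the $-1$ position among $n-1$ remaining-ish... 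Let me recompute: $|\mathcal E(n,k,1)|$: we need $x_1=1$, so $1\in A$; choose the other $k-1$ coordinates of $A$ from $[n]\setminus\{1\}$ in $\binom{n-1}{k-1}$ ways, then choose $x$ from the remaining $n-k$ coordinates. That gives $\binom{n-1}{k-1}(n-k)$, not $k\binom{n-1}{k}$. But $\binom{n-1}{k-1}(n-k) = k\binom{n-1}{k}\cdot\frac{?}{}$: indeed $k\binom{n-1}{k} = k\cdot\frac{(n-1)!}{k!(n-1-k)!} = \frac{(n-1)!}{(k-1)!(n-1-k)!}=\binom{n-1}{k-1}(n-k)$. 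Good, so the count matches and $\mathcal E(n,k,1)$ is exactly ``$x_1=+1$''; and the example above shows it is NOT intersecting, so I have misread — most likely the minimum scalar product being avoided, combined with $x_1=1$ forcing the $+1$ at coordinate $1$ in \emph{both}, and a bad pair needs the $-1$ of each to hit a $+1$ of the other, which coordinate $1$ being $+1$ in both does not prevent. Hence I conclude the paper must intend something I should re-derive from Definition~\ref{def3} carefully; I will treat the stated $\mathcal E(n,k,1)$ as given and as claimed intersecting, and focus the proof on the upper bound.

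For the upper bound $m(n,k,1)\le k\binom{n-1}{k}$ when $2k\le n\le k^2$, the plan is a shifting/compression argument followed by a weight-counting or kernel argument. First I would define an appropriate shifting operation $S_{ij}$ on families of pairs $(A,x)$ — pushing $+1$-coordinates down from $j$ to $i$ and handling the $-1$ pointer consistently — and show it does not decrease $|\mathcal V|$ and preserves the intersecting property (this is the standard but delicate part; one must check all cases of how a bad pair could be created). After reaching a shifted (compressed) family, I would analyze its structure: for each fixed ``pointer'' value or for each fixed support, the slices should themselves be intersecting families of $k$-sets in a suitable sense, letting me invoke Theorem~\ref{thmekr}. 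The bound $n\le k^2$ should enter precisely at the point where one compares the EKR bound $\binom{n-1}{k-1}$ per slice against an alternative ``star-like'' construction, and $n=k^2$ is the threshold where the two become equal; for $n\le k^2$ the EKR-type slices win and summing gives $k\binom{n-1}{k}$.

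Concretely, the cleanest route I foresee: fix the position $x$ of the $-1$, say $x=n$ by symmetry, and look at $\mathcal V_n=\{A : (A,n)\in\mathcal V\}\subset\binom{[n-1]}{k}$. Two members $(A,n),(B,n)$ are bad iff $n\in B$ — impossible since $B\subset[n-1]$ — so within a single pointer class there is no restriction. The restriction is \emph{between} pointer classes: if $(A,x)$ and $(B,y)$ with $x\ne y$ then badness needs $x\in B, y\in A$. This cross-class structure is what makes the problem hard, and I expect the main obstacle to be controlling these interactions after shifting. One promising device is, for each coordinate $i$, to bound $\sum_{x}|\{A\in\mathcal V_x : i\in A\}|$ and use a double-counting inequality reminiscent of the Hilton--Milner or Frankl method; alternatively, build an auxiliary bipartite ``conflict'' structure and show that an intersecting $\mathcal V$ corresponds to an independent set whose size is maximized by $\mathcal E(n,k,1)$. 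I would set up the extremal problem as: maximize $\sum_x |\mathcal V_x|$ subject to, for all $x\ne y$, there is no $A\in\mathcal V_x$ with $y\in A$ and $B\in\mathcal V_y$ with $x\in B$; this is a natural ``cross-intersecting complement'' condition to which EKR-style cross-intersecting theorems apply.

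The step I expect to be the genuine obstacle is \textbf{verifying that shifting preserves the intersecting property} in this $\{0,\pm1\}$ setting, because moving a $+1$ or re-pointing a $-1$ can create the forbidden scalar-product configuration in ways that have no analogue in the classical EKR proof; getting the case analysis right (and choosing the \emph{correct} shift operation, possibly one that simultaneously acts on supports and pointers) is where most of the work lies. The secondary obstacle is the extremal analysis of the resulting cross-intersecting system and pinning down that $n\le k^2$ is exactly the right hypothesis — I would expect to need the precise form of a cross-intersecting EKR inequality (Frankl--Tokushige type) to close the gap and to see the phase transition emerge.
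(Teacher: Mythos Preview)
Your characterization of the minimum scalar product is wrong, and this error derails the entire proposal. For $(A,x),(B,y)\in\mathcal V(n,k,1)$ one has
\[
\langle v,w\rangle \;=\; |A\cap B|\;-\;[y\in A]\;-\;[x\in B]\;+\;[x=y],
\]
so $\langle v,w\rangle=-2$ requires $x\in B$, $y\in A$, \emph{and} $A\cap B=\emptyset$. In your ``counterexample'' $A=\{1,2\},x=3$, $B=\{1,3\},y=2$ the scalar product is $1-1-1=-1$, not $-2$. The family $\mathcal E(n,k,1)$ is intersecting for the most transparent reason: $1\in A\cap B$ for every pair, hence $|A\cap B|\ge1$ and the scalar product is at least $-1$. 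Consequently your reformulated constraint (``for all $x\ne y$, no $A\in\mathcal V_x$ with $y\in A$ and $B\in\mathcal V_y$ with $x\in B$'') is far too strong; the true forbidden configuration additionally demands $A\cap B=\emptyset$, which is a genuine cross-intersecting condition on the $+1$-supports. Everything you build on the wrong reformulation collapses.

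Beyond this, the proposal is a list of intentions rather than a proof, and it misidentifies the difficulties. Shifting for $\{0,\pm1\}$-vectors (swap coordinates $i,j$ whenever $v_i<v_j$) is routine and preserves the intersecting property by a short case check; this is not the obstacle. The paper's proof splits into two regimes. For $2k\le n\le 3k-1$ it applies Katona's circle method directly to a well-chosen $n$-element subfamily $\mathcal H\subset\mathcal V(n,k,1)$ (cyclic $k$-intervals with the $-1$ placed $k$ steps before the interval) and shows $|\mathcal F\cap\mathcal H|\le k$; vertex-transitivity of the ambient graph finishes it. For $3k\le n\le k^2$ the argument is recursive: one proves $m(n{+}1,k,1)-m(n,k,1)=k\binom{n-1}{k-1}$. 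After shifting, the vectors nonzero in coordinate $n{+}1$ split as $\mathcal A=\mathcal F(-(n{+}1))$ and $\mathcal B=\mathcal F(n{+}1)$; for each $i\in[n]$ the families $\mathcal A(i),\mathcal B(-i)\subset\binom{[n]\setminus\{i\}}{k-1}$ are cross-intersecting with $\mathcal B(-i)\subset\mathcal A(i)$, and one bounds $|\mathcal A(i)|+k|\mathcal B(-i)|$ and averages over $i$. The serious work is the case $\mathcal B(-1)\ne\emptyset$: here shifting propagates sets into many $\mathcal B(-i)$, a degree version of Erd\H os--Ko--Rado caps each $|\mathcal B(-i)|$, and a sharpened cross-intersecting inequality (obtained from a careful Kruskal--Katona analysis of lexicographic initial segments) controls $|\mathcal A(i)|+k|\mathcal B(-i)|$ given that cap. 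None of this structure is visible in your plan, and slicing by the pointer position $x$ as you suggest does not produce the cross-intersecting pairs one needs.
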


The proof of (\ref{eq03}) is much simpler in the range $2k\le n\le 3k-1$. It can be done using Katona's Circle Method (cf. \cite{Ka1}). We are going to present this case in Section \ref{sec4}. However, for $n\geq 3k$ the proof of (\ref{eq03}) is much harder and much more technical. It is postponed till Section \ref{sec6}.\\

What happens for $n>k^2$?

Suppose that $\mathcal V\subset \mathcal V(n,k,1)$ is intersecting. Adding an extra $0$ in the $n+1$'st coordinate makes it possible to consider $\mathcal V$ as a subset of $\mathcal V(n+1,k,1)$. Adding the ${n\choose k}$ vectors that have $-1$ in the $n+1$'st coordinate produce an intersecting family $\mathcal P(\mathcal V)\subset\mathcal V(n+1,k,1)$. This implies the following inequality:
\begin{equation}\label{eq04}
m(n+1,k,1)\ge m(n,k,1)+{n\choose k}.
\end{equation}
As a counterpart for (\ref{eq04}) we prove

\begin{thm}\label{thmbig} For $n\ge k^2$ one has
\begin{equation}\label{eq05} m(n+1,k,1)=m(n,k,1)+{n\choose k}.
\end{equation}\end{thm}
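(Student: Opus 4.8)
The lower bound in \eqref{eq05} is exactly \eqref{eq04}, so the plan is to prove the matching upper bound $m(n+1,k,1)\le m(n,k,1)+\binom nk$. Throughout I encode a vector $v\in\mathcal V(n+1,k,1)$ as a pair $(A_v,b_v)$, where $A_v\subset[n+1]$, $|A_v|=k$, is the set of its $+1$'s and $b_v\in[n+1]\setminus A_v$ its $-1$; then $\langle v,w\rangle=-2$ exactly when $A_v\cap A_w=\emptyset$, $b_v\in A_w$ and $b_w\in A_v$, and I call such a pair \emph{crossing}. Let $\mathcal V$ be an intersecting family of maximum size $m(n+1,k,1)$; being of maximum size it is inclusion-maximal. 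For $i\in[n+1]$ split $\mathcal V=\mathcal V^0_i\sqcup\mathcal V^+_i\sqcup\mathcal V^-_i$ according to whether the $i$th coordinate equals $0$, $+1$ or $-1$. Deleting the $i$th coordinate turns $\mathcal V^0_i$ into an intersecting subfamily of $\mathcal V(n,k,1)$ (scalar products do not change), so $|\mathcal V^0_i|\le m(n,k,1)$; it turns $\mathcal V^-_i$ into a family of $k$-subsets of $[n+1]\setminus\{i\}$, so $|\mathcal V^-_i|\le\binom nk$; and it turns $\mathcal V^+_i$ into a family in $\mathcal V(n,k-1,1)$. The whole theorem thus reduces to exhibiting one coordinate $i$ with $|\mathcal V^+_i|+|\mathcal V^-_i|\le\binom nk$; the cleanest instance is $\mathcal V^+_i=\emptyset$, giving at once $|\mathcal V|=|\mathcal V^0_i|+|\mathcal V^-_i|\le m(n,k,1)+\binom nk$.

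Next I would record a dictionary for inclusion-maximal $\mathcal V$. If $\mathcal V^+_i=\emptyset$ then no member uses $i$ as a $+1$, so every $(A,i)$ with $A\in\binom{[n+1]\setminus\{i\}}{k}$ is crossing with nothing in $\mathcal V$, and maximality forces all of them into $\mathcal V$; hence $|\mathcal V^-_i|=\binom nk$, and conversely (here $n\ge 2k$ is used) if all those $(A,i)$ lie in $\mathcal V$ then no member can use $i$ as a $+1$. Quantitatively, write $\mu_i:=\binom nk-|\mathcal V^-_i|$ for the number of $k$-sets $A$ with $(A,i)\notin\mathcal V$. By maximality each such $A$ is blocked by some \emph{witness} $w\in\mathcal V^+_i$ with $A\cap A_w=\emptyset$ and $b_w\in A$; conversely, for each $w\in\mathcal V^+_i$ all $\binom{n-k}{k-1}$ sets $A$ of this form are missing. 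Hence $\mu_i\ge\binom{n-k}{k-1}$ whenever $\mathcal V^+_i\ne\emptyset$, and in every case $|\mathcal V|\le m(n,k,1)+\binom nk+\bigl(|\mathcal V^+_i|-\mu_i\bigr)$. So it is enough to find one coordinate with $\mu_i\ge|\mathcal V^+_i|$; in particular I am done as soon as some coordinate has $|\mathcal V^+_i|=0$ or $1\le|\mathcal V^+_i|\le\binom{n-k}{k-1}$.

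The remaining case --- the real obstacle --- is when $|\mathcal V^+_i|>\binom{n-k}{k-1}$ for \emph{every} $i$, and it calls for two extremal-set-theory estimates, which I expect are the tools of independent interest announced in the abstract. First, a lower bound on $\mu_i$ that grows with $|\mathcal V^+_i|$: $\mu_i$ is the size of the union of the blocked families $M_w=\{A:A\cap A_w=\emptyset,\ b_w\in A\}$ over the witnesses $w\in\mathcal V^+_i$, and distinct witnesses (which, once $i\in A_w\cap A_{w'}$, are otherwise unconstrained by the intersecting condition) genuinely differ: $M_w\not\subseteq M_{w_1}\cup\dots\cup M_{w_r}$ holds iff there is a $k$-set $A\subseteq[n+1]\setminus A_w$ with $b_w\in A$ which, for every $j$, either meets $A_{w_j}$ or omits $b_{w_j}$, and a counting of these simultaneous-avoidance constraints should show such an $A$ exists once $n$ is large --- with threshold exactly $n=k^2$, which is the source of the phase transition. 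Second, and more delicately, the bound $|\mathcal V^0_i|\le m(n,k,1)$ is wasteful, because $\mathcal V^0_i$ and $\mathcal V^+_i$ are cross-intersecting; a sharp estimate for a pair made of an intersecting family in $\mathcal V(n,k,1)$ and a family in $\mathcal V(n,k-1,1)$ cross-intersecting with it --- again valid precisely for $n\ge k^2$ --- would charge $|\mathcal V^0_i|+|\mathcal V^+_i|$ directly against $m(n,k,1)$ and close the case. By contrast the naive alternative, summing $|\mathcal V|\le m(n,k,1)+|\mathcal V^+_i|+|\mathcal V^-_i|$ over all $i$, yields only $|\mathcal V|\le\frac{n+1}{n-k}m(n,k,1)$, which for every $n\ge k^2$ overshoots $m(n,k,1)+\binom nk$ by a positive (for fixed $k$, vanishing) amount and hence does not suffice.
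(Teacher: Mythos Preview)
Your reduction to finding a coordinate $i$ with $|\mathcal V^+_i|+|\mathcal V^-_i|\le\binom nk$ is sound, and the maximality dictionary in the second paragraph is correct. But the argument stops precisely where the content lies: the final paragraph is a wish list, not a proof. You propose two tools --- a union lower bound on $\mu_i$ and a cross-intersecting estimate for $(\mathcal V^0_i,\mathcal V^+_i)$ --- and assert that each ``should'' work with threshold $n=k^2$, yet you neither state them precisely nor prove them. As you yourself note, the naive averaging $\sum_i(|\mathcal V^+_i|+|\mathcal V^-_i|)=(k+1)|\mathcal V|$ is too weak, so something genuinely new is required here, and you have not supplied it.

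The paper's argument is organised differently and avoids your obstacle entirely. First it \emph{shifts} $\mathcal V$, which lets it fix the coordinate once and for all: take $i=n+1$, set $\mathcal A=\mathcal V^-_{n+1}$ and $\mathcal B=\mathcal V^+_{n+1}$, and the goal becomes $|\mathcal A|+|\mathcal B|\le\binom nk$. Now, instead of studying $\mathcal A$ and $\mathcal B$ directly, it slices them again: for each $j\in[n]$ let $\mathcal A(j)$ be the $(k-1)$-sets coming from members of $\mathcal A$ with a $+1$ at $j$, and $\mathcal B(-j)$ the $(k-1)$-sets coming from members of $\mathcal B$ with the $-1$ at $j$. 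These live in $\binom{[n]\setminus\{j\}}{k-1}$, they are cross-intersecting (a disjoint pair would yield a crossing pair in $\mathcal V$), and shifting gives the crucial containment $\mathcal B(-j)\subset\mathcal A(j)$. Theorem~\ref{thmcross} then bounds $|\mathcal A(j)|+k|\mathcal B(-j)|\le\max\bigl\{\binom{n-1}{k-1},(k+1)\binom{n-2}{k-2}\bigr\}$. Summing over $j\in[n]$ gives $k|\mathcal A|+k|\mathcal B|$ on the left (each member of $\mathcal A$ has $k$ ones in $[n]$, each member of $\mathcal B$ has one $-1$ in $[n]$ --- this is why the weight $k$ on $\mathcal B(-j)$ is chosen), and the right-hand side equals $\binom nk$ exactly when $n\ge k^2$. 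The point is that this is an averaging over a \emph{second} layer of coordinates after the first has been frozen by shifting; your averaging over the single layer $i$ cannot see this structure, and your proposed cross-intersecting pair $(\mathcal V^0_i,\mathcal V^+_i)$ lives in the wrong universe (mixed $\mathcal V(n,k,1)\times\mathcal V(n,k-1,1)$) compared with the clean $\binom{[n-1]}{k-1}$ setting the paper works in.
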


As an immediate consequence we have the following.
\begin{cor} For $n> k^2$ one has
$$m(n,k,1) = m(k^2,k,1)+{k^2\choose k}+{k^2+1\choose k}+\ldots +{n-1\choose k}.$$
\end{cor}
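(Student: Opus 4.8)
The inequality $m(n+1,k,1)\ge m(n,k,1)+\binom nk$ is exactly (\ref{eq04}), so it remains to prove $m(n+1,k,1)\le m(n,k,1)+\binom nk$ for $n\ge k^2$. Let $\mathcal V\subseteq\mathcal V(n+1,k,1)$ be intersecting and write $\mathcal V=\mathcal V_0\cup\mathcal V_+\cup\mathcal V_-$ according to whether coordinate $n+1$ of a vector equals $0$, $+1$ or $-1$. Inspecting when two such vectors have scalar product $-2$, one finds that a conflicting pair must be of one of three types: both in $\mathcal V_0$; one in $\mathcal V_0$, one in $\mathcal V_+$; or one in $\mathcal V_+$, one in $\mathcal V_-$. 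In particular $\mathcal V_+$ and $\mathcal V_-$ are each automatically intersecting, and \emph{no} vector of $\mathcal V_0$ conflicts with a vector of $\mathcal V_-$. Deleting coordinate $n+1$ makes $\mathcal V_0$ an intersecting subfamily of $\mathcal V(n,k,1)$, so $|\mathcal V_0|\le m(n,k,1)$, while $|\mathcal V_-|\le\binom nk$ is trivial. Hence if $\mathcal V_+=\emptyset$ we are done, and the real task is to ``pay for'' $\mathcal V_+$.

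My plan is to reabsorb $\mathcal V_+$ into the other two layers. Call a vector with coordinate $n+1$ equal to $0$ (resp.\ $-1$) a $0$-layer (resp.\ $(-1)$-layer) vector, and call it \emph{free} if it lies outside $\mathcal V_0\cup\mathcal V_-$. I would construct an injection $\Psi$ of $\mathcal V_+$ into the free $0$-layer and $(-1)$-layer vectors such that $\mathcal V_0\cup\mathcal V_-\cup\Psi(\mathcal V_+)$ is still intersecting. This suffices: the new family has no vector with $+1$ in coordinate $n+1$, so by the conflict description it is the union of an intersecting subfamily of $\mathcal V(n,k,1)$ and some $B$-vectors (those with $-1$ in position $n+1$), with no conflicts between the two parts, whence its size is at most $m(n,k,1)+\binom nk$; and since $\Psi$ is injective into free vectors this size equals $|\mathcal V_0|+|\mathcal V_-|+|\mathcal V_+|=|\mathcal V|$. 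The easy case is $|\mathcal V_+|+|\mathcal V_-|\le\binom nk$: a free $(-1)$-layer vector may be taken to be \emph{any} $B\in\binom{[n]}{k}\setminus\mathcal V_-$, because by the conflict analysis it conflicts with nothing in $\mathcal V_0\cup\mathcal V_-$ and with no other $(-1)$-layer vector; so $\mathcal V_+$ embeds straight into the $\binom nk-|\mathcal V_-|\ge|\mathcal V_+|$ free $(-1)$-layer vectors. (One checks that $\mathcal E(n+1,k,1)$ itself falls in this case precisely when $n\ge k^2$.)

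The case $|\mathcal V_+|+|\mathcal V_-|>\binom nk$ is the heart of the proof, and here $0$-layer slots must be used as well. The natural assignment is to send $v\in\mathcal V_+$, with $-1$ in position $j$ and $+1$'s on a $(k-1)$-set $A$ and in position $n+1$, to the $0$-layer vector that keeps the $-1$ at $j$ and the $+1$'s on $A$ and moves the last $+1$ onto some fresh coordinate $c\in[n]\setminus(A\cup\{j\})$; the constraints are that $c$ avoid the few coordinates carrying the $-1$ of a vector of $\mathcal V_0$ that has $+1$ at $j$ and no $+1$ in $A$, that the image avoid $\mathcal V_0$, and that the $0$-layer images be pairwise non-conflicting. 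The point is that in this regime the sizes of $\mathcal V_0,\mathcal V_+,\mathcal V_-$ together with the conflict constraints are very rigid: either $\mathcal V_-$ is large, which forces $\mathcal V_+$ to be small and close to a star, or $\mathcal V_-$ is small, leaving many free $(-1)$-layer vectors while forcing $\mathcal V_0$ to be so restricted that many $0$-layer vectors near each element of $\mathcal V_+$ are free. The main obstacle is to turn this into a verification of Hall's condition for the bipartite assignment graph between $\mathcal V_+$ and the available free vectors, uniformly over all subfamilies of $\mathcal V_+$; this appears to require a stability description of large conflict-free pairs $(\mathcal V_+,\mathcal V_-)$, presumably one of the extremal-set-theory tools the abstract refers to.

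The hypothesis $n\ge k^2$ enters at a single numerical point: it is equivalent to $n-k\ge k(k-1)$, that is, to $(n-k+1)\binom{n-1}{k-2}\le\binom{n-1}{k}$, and this is exactly the inequality that keeps the star $\mathcal E(n+1,k,1)=\{x\in\mathcal V(n+1,k,1):x_1=1\}$ — whose $\mathcal V_+$-part has size $(n-k+1)\binom{n-1}{k-2}$ and whose $\mathcal V_-$-part has size $\binom{n-1}{k-1}$ — from exceeding $m(n,k,1)+\binom nk$. For $n<k^2$ it fails, $|\mathcal E(n+1,k,1)|>m(n,k,1)+\binom nk$, which is simultaneously why (\ref{eq04}) is no longer tight and the source of the phase transition at $n=k^2$.
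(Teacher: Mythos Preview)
Your write-up is not a proof of the Corollary but an attempted proof of the underlying recursion $m(n+1,k,1)\le m(n,k,1)+\binom{n}{k}$ for $n\ge k^2$ (the Corollary then follows by summing). Your conflict analysis and the decomposition $\mathcal V=\mathcal V_0\cup\mathcal V_+\cup\mathcal V_-$ are correct, and the easy case $|\mathcal V_+|+|\mathcal V_-|\le\binom nk$ is fine. The problem is that the remaining case is not proved: you describe an injection scheme, list the constraints it would have to satisfy, and then say that verifying Hall's condition ``appears to require a stability description \ldots\ presumably one of the extremal-set-theory tools the abstract refers to.'' That is an acknowledgment of a gap, not a proof. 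Nothing in your sketch shows why the required free $0$-layer vectors exist in sufficient number, nor why the images can be chosen pairwise non-conflicting; these are exactly the hard points.

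In fact, the paper's argument shows that your ``hard case'' is empty. After replacing $\mathcal V$ by its shift (Lemma~\ref{lem2}), the paper considers, for each $i\in[n]$, the $(k-1)$-uniform families $\mathcal A(i)$ (sets of $+1$'s of vectors in $\mathcal V_-$ containing $i$) and $\mathcal B(-i)$ (sets of $+1$'s of vectors in $\mathcal V_+$ whose $-1$ is at $i$); these are cross-intersecting with $\mathcal B(-i)\subset\mathcal A(i)$ by shiftedness, and Theorem~\ref{thmcross} gives $|\mathcal A(i)|+k|\mathcal B(-i)|\le\max\{\binom{n-1}{k-1},(k+1)\binom{n-2}{k-2}\}$. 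Summing over $i$ yields $k(|\mathcal V_-|+|\mathcal V_+|)\le n\max\{\binom{n-1}{k-1},(k+1)\binom{n-2}{k-2}\}$, and for $n\ge k^2$ the first term wins, giving $|\mathcal V_+|+|\mathcal V_-|\le\binom nk$ outright. Together with $|\mathcal V_0|\le m(n,k,1)$ this finishes the proof with no injection needed. So the missing ingredient in your approach is precisely what the paper supplies: a direct upper bound on $|\mathcal V_+|+|\mathcal V_-|$ via shifting plus a weighted cross-intersecting inequality, which eliminates the dichotomy altogether.
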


We remark that in the case of $l>1$ the problem becomes much harder. We have managed to solve it completely for $2k\le n\le 3k-l$ and asymptotically (for $n$ large with respect to $k$) in \cite{FK10}. However, finding the exact value of $m(n,k,l)$ for $n>3k-l$ is still wide open.

The paper is organized as follows. In the next section we present a brief summary of the paper, containing the ideas and the logic of the proofs. In Section \ref{sec31} we discuss the constructions of intersecting families. In Section \ref{sec3} we summarize all the necessary material from extremal set theory, in particular, concerning shadows, shifting and cross-intersecting families, and prove several auxiliary statements. In Section \ref{sec4} we prove (\ref{eq03}) in the case $2k\le n\le 3k-1$. In Section \ref{sec5} we prove Theorem \ref{thmbig}. In Section \ref{sec6} we prove (\ref{eq03}) in the case $3k\le n\le k^2$.

Throughout the paper we assume that $n\ge 2k$ and that $k>1$.

\section{Summary}
This section is meant to summarize the structure of the proof and often refers to the statements in the proof. Therefore, some parts of it may be difficult to follow before actually reading through some parts of the proof. Thus, rather then giving a rough idea for the reader beforehand, it is aimed to help readers to see through the (rather complicated) details when reading the proof.\\

First, for the whole proof we assume that the families are shifted, i.e., $1$'s normally appear before $0$'s and $-1$'s, and $0$'s appear before $-1$'s among the coordinate positions of vectors from our family. The precise definition of shifting and the proof of the fact that it preserves the property that the family is intersecting is discussed in Section \ref{sec321}. \\

We start with the sketch of the proof of Theorem \ref{thmbig}. The goal is to show that the number of the vectors containing $+1$ or $-1$ on the last coordinate is at most ${n\choose k}$. These vectors fall into two groups: $\mathcal A$, with $-1$ at the end, and $\mathcal B$, with $1$ at the end. We fix another coordinate $i<n+1$ and take the vectors from  $\mathcal A$, denoted $\mathcal A(i)$, that have 1 on $i$-th position, and vectors from $\mathcal B$, denoted $\mathcal B(-i)$, that have $-1$ on $i$-th position.

For each such $i$ we may treat the resulting families of vectors as two set families of $(k-1)$-sets, which are cross-intersecting. We remark that, due to shifting, $\mathcal B(-i)\subset \mathcal A(i)$ and, in particular, $\mathcal B(-i)$ is intersecting. Then we bound the expression $|\mathcal A(i)|+k|\mathcal B(-i)|$ from above using Theorem \ref{thmcross} from Section \ref{sec44}. Finally we average the result over all possible $i$, obtaining a desired bound on $|\mathcal A|+|\mathcal B|$. Note that the coefficient $k$ is needed so that we get the expression $|\mathcal A|+|\mathcal B|$ after averaging.\\

Next, we sketch the proof of Theorem \ref{thmsmall}. For $2k\le n\le 3k-1$ the  theorem is obtained via a direct application of Katona's circle method. The general form of this method is discussed in Section \ref{sec43}. The only trick is to  choose a good subset of vertices to which we can apply the method.\\

The case of $3k\le n\le k^2$ is the most difficult part of the proof. Again, the idea is to apply the argument with the averaging used for the proof of Theorem \ref{thmbig}. However, in this case there is a major complication. If one takes a look at Theorem \ref{thmcross} with given parameters (the size of the ground set is $n-1$, since $i$ and $n+1$ are not present, and the size of each set in $\mathcal A(i), \mathcal B(-i)$ is $k-1$), the maximum value of the expression $|\mathcal A(i)|+k|\mathcal B(-i)|$ is $(k+1){n-2\choose k-2}$ and is attained in the case when both are the trivial intersecting families.
 However, if we look at the Erd\H os-Ko-Rado-type family, which is expected to be maximal in this range, and the corresponding sets $\mathcal A(i), \mathcal B(i)$, then they are indeed the trivial intersecting families for all $i$ except for $i = 1$. In that case we have the other extreme: $\mathcal B(1)$ is empty and $\mathcal A(1) = {[n-1]\choose k-1}$. Therefore, if we apply the bound from Theorem \ref{thmcross} for all $i$ blindly, then after the averaging we get a worse bound, with the difference of $(k+1){n-2\choose k-2}-{n-1\choose k-1}$ from the size of the Erd\H os-Ko-Rado-type family.

The idea to circumvent it is as follows. If $\mathcal B(-1)$ is empty, then we are home. If not, then, due to the fact that the whole family is shifted, we may conclude that there is a relatively big set $I$, $I\subset [n]$, such that for all $i\in I$ there are  many sets in  $\mathcal B(-i)$  that do not contain element $\{1\}$.
The precise statement is Proposition \ref{prop4}. The next step is to use Corollary \ref{cordeg} of a theorem due to the first author from Section \ref{sec47}, which roughly states that we can bound non-trivially the size of the intersecting family from above, provided that we know that it is far from trivial intersecting family, that is, if there are many sets that do not contain the element with the biggest degree. By non-trivial we mean a bound that is smaller than the size of the trivial intersecting family.

The result of the manipulations presented in the previous paragraph is a non-trivial bound on the size of each $\mathcal B(-i)$, $i\in I$, provided that $\mathcal B(-1)$ is non-empty.  Finally, we bound the size of each $|\mathcal A(i)|+k|\mathcal B(-i)|$ using Theorem \ref{thmcross2} from  Section \ref{sec46}, which is a refined version of Theorem \ref{thmcross}. Using this bound, we apply the same averaging as in Theorem \ref{thmbig} and show that in all cases the size of $|\mathcal A|+|\mathcal B|$ is at most the size of $|\mathcal A|+|\mathcal B|$ when $\mathcal B(-1)$ is empty. Speaking very roughly, the sets in $\mathcal B(-1)$ force the sets $\mathcal B(-i)$ to be small, and thus force the whole sum $|\mathcal A|+|\mathcal B|$ to be small.
The large part of Section \ref{sec6} is devoted to the calculations that ensure that it is indeed the case.

Theorem \ref{thmcross2}, which gives very fine-grained bounds on $|\mathcal A(i)|+k|\mathcal B(-i)|$ depending on the size of $\mathcal B(-i)$, is itself one of the complicated parts of the proof. First, we do a detailed analysis of the maximal cross-intersecting families using and refining Kruskal-Katona Theorem in Section \ref{sec45}. The results of this section are interesting in their own right, as they provide a better understanding of the structure of cross-intersecting families (we refer to the papers \cite{KZ}, \cite{Kup17}, where these and some complementary ideas were developed). The language of truncated characteristic vectors, introduced in Section \ref{sec45} seems to be very convenient. Lemmas \ref{lem12}, \ref{lem13} allow us to reduce the wide array of different cross-intersecting families to a few, one of which is guaranteed to be maximum w.r.t. the expression we maximize. The proof of Theorem \ref{thmcross2} itself is a more technical counterpart.

\section{Comparing the constructions}\label{sec31}
To get some intuition for the problem, we start with the comparison of the constructions of intersecting families briefly discussed in the introduction.

The first intersecting family $\mathcal E(n,k,1)$ is the Erd\H os-Ko-Rado-type family, mentioned in the introduction, in which all the vectors have $1$ on the first position. We have $e(n,k,1):=|\mathcal E(n,k,1)| =  k{n-1\choose k}.$ Note that we have $v(n,k,1) = |\mathcal V(n,k,1)| = (k+1){n\choose k+1}.$ Therefore, we have $e(n,k,1)/v(n,k,1) = k/n$.

The second  family $\mathcal P(n,k,1)$ consists of all the vectors for which the last non-zero coordinate is $-1$. It is easy to see that this is indeed an intersecting family. We have $p(n,k,1):= |\mathcal P(n,k,1)| = {n\choose k+1}$. Therefore, we have $p(n,k,1)/v(n,k,1) = 1/(k+1)$.

\begin{prop}\label{prop1} The inequality $e(n+1,k,1)-e(n,k,1)\ge p(n+1,k,1)-p(n,k,1)$ holds iff $n\le k^2$. We have equality iff $n = k^2$.
\end{prop}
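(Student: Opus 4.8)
The plan is a short direct computation based on Pascal's identity, so I do not expect any real obstacle; the only point requiring a word of care is a nonvanishing binomial coefficient.

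First I would rewrite the left-hand difference. Since $e(n,k,1)=k\binom{n-1}{k}$, shifting $n\mapsto n+1$ gives $e(n+1,k,1)=k\binom{n}{k}$, hence
$e(n+1,k,1)-e(n,k,1)=k\bigl(\binom{n}{k}-\binom{n-1}{k}\bigr)=k\binom{n-1}{k-1}$,
where the last equality is Pascal's rule. Next I would treat the right-hand difference. Since $p(n,k,1)=\binom{n}{k+1}$, we get
$p(n+1,k,1)-p(n,k,1)=\binom{n+1}{k+1}-\binom{n}{k+1}=\binom{n}{k}$,
again by Pascal's rule.

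Consequently the inequality $e(n+1,k,1)-e(n,k,1)\ge p(n+1,k,1)-p(n,k,1)$ is equivalent to $k\binom{n-1}{k-1}\ge\binom{n}{k}$. Here I would invoke the identity $\binom{n}{k}=\tfrac{n}{k}\binom{n-1}{k-1}$, valid since $k\ge 1$. Because $n\ge 2k$ (the paper's standing hypothesis), we have $n-1\ge k-1$ and so $\binom{n-1}{k-1}>0$; dividing both sides by this positive quantity, the inequality becomes $k\ge n/k$, i.e.\ $n\le k^2$. Moreover every step above is an equivalence, and the final inequality $k\ge n/k$ is an equality exactly when $n=k^2$, which yields the stated equality case. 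The only subtlety worth flagging is the need to know $\binom{n-1}{k-1}\neq 0$ before dividing, which is guaranteed by $n\ge 2k$; beyond that the argument is purely mechanical.
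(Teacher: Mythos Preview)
Your proof is correct and follows essentially the same approach as the paper: both compute the two differences via Pascal's rule to get $k\binom{n-1}{k-1}$ and $\binom{n}{k}=\frac{n}{k}\binom{n-1}{k-1}$, then compare $k$ with $n/k$. You are simply a bit more explicit about the division step and the positivity of $\binom{n-1}{k-1}$.
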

\begin{proof} The proof is a matter of simple calculations:
$$e(n+1,k,1)-e(n,k,1) = k{n\choose k}-k{n-1\choose k} = k{n-1\choose k-1},$$
$$p(n+1,k,1)-p(n,k,1) = {n+1\choose k+1}-{n\choose k+1} = {n\choose k} = \frac nk{n-1\choose k-1}.$$
\vskip-0.6cm \end{proof}


The second construction of intersecting families allows for the following generalization, described in the introduction. Assume we are given an intersecting family $\mathcal F\subset \mathcal V(n,k,1)$. We can construct an intersecting family $\mathcal {P(F)}\subset \mathcal V(n+1,k,1)$ in the following way:
$$\mathcal {P(F)} = \{(\mathbf v, 0):\mathbf v\in \mathcal F\}\cup \{\mathbf w = (w_1,\ldots, w_{n+1}): \mathbf w\in V(n+1,k,1), w_{n+1} = -1\}.$$

Since $\mathcal F$ is intersecting, $\mathcal {P(F)}$ is intersecting as well.  We have
\begin{equation}\label{eq1} |\mathcal {P(F)}|-|\mathcal F| = {n\choose k} = p(n+1,k,1)-p(n,k,1).\end{equation}

We denote by $\mathcal P^s(\mathcal F)$ the result of $s$ consecutive applications of operation $\mathcal P(*)$ to  the family $\mathcal F$. This gives us the following composite construction of an intersecting family $\mathcal C(n,k,1)\subset \mathcal V(n,k,1)$.

\begin{equation}\label{eq2}\mathcal C(n,k,1) =\begin{cases}\mathcal E(n,k,1), & \text{ if  $n\le k^2$};\\ \mathcal P^{n-k^2}\bigl(\mathcal E(k^2,k,1)\bigr), & \text{ if  $n> k^2$}.\end{cases}\end{equation}

We denote the cardinality of $\mathcal C(n,k,1)$ by $c(n,k,1)$. We have $c(n,k,1) = e(n,k,1)$ for $n\le k^2$ and, due to to (\ref{eq1}), $c(n,k,1) = e(k^2,k)-p(k^2,k)+p(n,k)$ for $n>k^2$.
 By Proposition~\ref{prop1}, $\mathcal C(n,k,1)$ is the biggest intersecting family among the ones discussed in this subsection. In what follows we prove that $\mathcal C(n,k,1)$ has maximum cardinality among intersecting families in $\mathcal V(n,k,1)$.\\

\textbf{Remark.} Due to the fact that equality is possible in Proposition \ref{prop1}, there is a slightly different intersecting family that has exactly the same cardinality as $\mathcal C(n,k,1)$. Its definition is almost the same, we only have to replace $k^2$ by $k^2+1$ in (\ref{eq2}).

\section{Auxiliaries from extremal set theory}\label{sec3}
In this section we present several auxiliary results and techniques that we'll use in the latter sections. Some of the results presented here are well-known, while the others appear to be new and may be of independent interest.
\subsection{Shifting}\label{sec321}
We start with \textit{shifting} (left compression). For a given  pair of indices $i<j\in [n]$ and a vector $\mathbf v = (v_1,\ldots, v_n)\in \R^n$ we define an $(i,j)$-shift $\mathbf v_{i,j}$ of $\mathbf v$ in the following way. If $v_i\ge v_j$, then $\mathbf v_{i,j} = \mathbf v$. If $v_i<v_j$, then $\mathbf v_{i,j} = (v_1,\ldots, v_{i-1},v_j,v_{i+1},\ldots,v_{j-1},v_i,v_{j+1},\ldots, v_n),$ that is, it is obtained from $\mathbf v$ by interchanging its $i$-th and $j$-th coordinate.

Next, we define an $(i,j)$-shift $\mathcal Q_{i,j}$ of $\mathcal Q$ for a finite system of vectors $\mathcal Q\subset \R^n$. We take a vector $\mathbf v \in \mathcal Q$ and replace it with $\mathbf v_{i,j}$, if $\mathbf v_{i,j}$ is not already in $\mathcal Q$. If it is, then we leave $\mathbf v$ in the system. Formally,

$$\mathcal Q_{i,j} = \{\mathbf v_{i,j}: \mathbf v\in \mathcal Q\}\cup \{\mathbf v: \mathbf v,\mathbf v_{i,j}\in \mathcal Q\}.$$

We call a system $\mathcal Q$ \textit{shifted}, if $\mathcal Q = \mathcal Q_{i,j}$ for all $i<j\in [n]$. Any  system of vectors may be made shifted by means of a finite number of $(i,j)$-shifts. Here is the crucial lemma concerning shifting:

\begin{lem}\label{lem2} For any  $\mathcal Q\subset \R^n$ and any $i<j\in [n]$ we have
$$\min \{\langle \mathbf v,\mathbf w\rangle: \mathbf v, \mathbf w\in \mathcal Q\}\le \min \{\langle \mathbf v',\mathbf w'\rangle: \mathbf v', \mathbf w'\in \mathcal Q_{i,j}\}.$$
\end{lem}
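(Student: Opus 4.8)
The plan is to show that whenever $\mathbf v,\mathbf w \in \mathcal Q_{i,j}$ achieve a scalar product $\langle\mathbf v,\mathbf w\rangle = m'$, one can find $\mathbf v_0,\mathbf w_0 \in \mathcal Q$ with $\langle\mathbf v_0,\mathbf w_0\rangle \le m'$; this immediately gives $\min_{\mathcal Q} \le \min_{\mathcal Q_{i,j}}$. First I would fix the pair $i<j$ and dispose of the trivial cases: if both $\mathbf v$ and $\mathbf w$ already lie in $\mathcal Q$ we are done, so assume at least one of them, say $\mathbf v$, arose as a genuine shift, i.e.\ $\mathbf v = \mathbf u_{i,j}$ with $\mathbf u\in\mathcal Q$, $u_i<u_j$, and $\mathbf v\neq \mathbf u$. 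Because the shift only touches coordinates $i$ and $j$, I would write $\langle \mathbf v,\mathbf w\rangle = S + (v_iw_i + v_jw_j)$ where $S = \sum_{t\ne i,j} v_tw_t$ is unaffected, and similarly record that $\langle \mathbf u,\mathbf w\rangle = S + (u_iw_i + u_jw_j)$ and, when $\mathbf w$ too is a shift $\mathbf w = \mathbf z_{i,j}$, that swapping the $i,j$-entries of $\mathbf w$ back produces $\mathbf z$.

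**The core case analysis.**

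The heart of the argument is a two-by-two case check on whether $\mathbf v$ and $\mathbf w$ are each ``shifted'' or ``unshifted'' in coordinates $i,j$. In the subcase where only $\mathbf v$ is a shift, I claim $\mathbf w\in\mathcal Q$ and I would compare $\langle\mathbf u,\mathbf w\rangle$ with $\langle\mathbf v,\mathbf w\rangle$: since $v_i=u_j$, $v_j=u_i$ with $u_i<u_j$, the difference $\langle\mathbf u,\mathbf w\rangle - \langle\mathbf v,\mathbf w\rangle = (u_i-u_j)(w_i-w_j) = -(u_j-u_i)(w_i-w_j)$, so if $w_i\ge w_j$ then $\langle\mathbf u,\mathbf w\rangle \le \langle\mathbf v,\mathbf w\rangle$ and $(\mathbf u,\mathbf w)$ works; the remaining possibility $w_i<w_j$ forces us instead to look at the swap of $\mathbf w$. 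Here the key observation, which is exactly why the definition of $\mathcal Q_{i,j}$ retains $\mathbf u$ ``only if $\mathbf u_{i,j}$ is already present,'' is that since $\mathbf w\in\mathcal Q$ and $w_i<w_j$, the shifted vector $\mathbf w_{i,j}$ must lie in $\mathcal Q_{i,j}$; one then checks that $\mathbf w' := \mathbf w_{i,j}\in\mathcal Q$ or, if not, the retention rule gives another usable pair — in any event $\langle \mathbf u, \mathbf w_{i,j}\rangle$ or $\langle \mathbf v, \mathbf w\rangle$ recovers the bound after the sign of $(w_i-w_j)$ flips. The subcase where both $\mathbf v$ and $\mathbf w$ are shifts is handled by simultaneously un-shifting: $\langle \mathbf u,\mathbf z\rangle$ differs from $\langle\mathbf v,\mathbf w\rangle$ only by swapping both coordinate pairs back, which leaves the $i,j$-contribution $v_iw_i+v_jw_j$ invariant under the simultaneous swap, so $\langle\mathbf u,\mathbf z\rangle = \langle\mathbf v,\mathbf w\rangle$ and $\mathbf u,\mathbf z\in\mathcal Q$.

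**Where the difficulty sits.**

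The bookkeeping obstacle — and it is the only real one — is the mixed subcase above, when $\mathbf v$ is a shift but $\mathbf w$ is retained unshifted with $w_i<w_j$, since then naively replacing $\mathbf v$ by $\mathbf u$ increases the product; the resolution requires the observation that $\mathbf w_{i,j}\in\mathcal Q_{i,j}$ and a careful argument that one can pick a preimage of it, or of $\mathbf w$ itself, in $\mathcal Q$ that pairs well with $\mathbf u$. I would organize this by splitting on whether $\mathbf w_{i,j}\in\mathcal Q$: if yes, use $(\mathbf u,\mathbf w_{i,j})$ and compute $\langle\mathbf u,\mathbf w_{i,j}\rangle = S + u_i w_j + u_j w_i$, which equals $\langle\mathbf v,\mathbf w\rangle + (u_i-u_j)(w_j-w_i) = \langle\mathbf v,\mathbf w\rangle - (u_j-u_i)(w_j-w_i) \le \langle\mathbf v,\mathbf w\rangle$ since both factors are positive; if no, then by the retention rule $\mathbf w$ was kept because $\mathbf w_{i,j}$ was \emph{already} in $\mathcal Q$ before this shift, but that is impossible if $\mathbf w\ne\mathbf w_{i,j}$ and $\mathbf w$ itself is unshifted — tracing the definition shows $\mathbf w_{i,j}\in\mathcal Q$ must then in fact hold, closing the loop. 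Finally, since the pair $i<j$ was arbitrary, iterating gives that making $\mathcal Q$ fully shifted cannot decrease the minimum scalar product, which is the form in which the lemma is used to justify the ``assume shifted'' reduction in the summary section.
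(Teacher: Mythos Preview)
Your argument is correct and follows essentially the same case analysis as the paper's proof. One small correction: in the mixed subcase where $\mathbf v=\mathbf u_{i,j}$ is a genuine shift and $\mathbf w\in\mathcal Q$ has $w_i<w_j$, you write $\langle\mathbf u,\mathbf w_{i,j}\rangle = \langle\mathbf v,\mathbf w\rangle - (u_j-u_i)(w_j-w_i)$, but in fact these two scalar products are \emph{equal} (since $v_i=u_j$ and $v_j=u_i$, the $i,j$-contribution $v_iw_i+v_jw_j$ already equals $u_iw_j+u_jw_i$); the paper records exactly this as the identity $\langle \mathbf v_{i,j},\mathbf w\rangle=\langle \mathbf v,\mathbf w_{i,j}\rangle$ and uses it to close the case in one line, which also makes your ``if no'' branch vacuous (when $w_i<w_j$ and $\mathbf w\in\mathcal Q_{i,j}$, the retention rule already forces $\mathbf w_{i,j}\in\mathcal Q$). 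None of this affects the validity of your proof.
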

\begin{proof} Take any two vectors $\mathbf v = (v_1,\ldots, v_n),\mathbf w = (w_1,\ldots, w_n)\in \mathcal Q$. We denote by $\mathbf v', \mathbf w'$ the result of the $(i,j)$-shift in $\mathcal Q$ applied to $\mathbf v,\mathbf w$ (that is, for $\mathbf v$ we have $\mathbf v' = \mathbf v$ or $\mathbf v_{i,j}$, depending on whether $\mathbf v_{i,j}$ is in $\mathcal Q$ or not). If $\mathbf v' = \mathbf v$ and $\mathbf w' = \mathbf w$, then, obviously, $\langle \mathbf v',\mathbf w'\rangle = \langle \mathbf v,\mathbf w\rangle$. Moreover, we have the same if both $\mathbf v' \ne \mathbf v$ and $\mathbf w' \ne \mathbf w$. Therefore, the only nontrivial case we need to consider is when $\mathbf v' \ne \mathbf v$ and $\mathbf w' = \mathbf w$.

The reasons for $\mathbf v'$ being different from $\mathbf v$ are unambiguous: $v_i<v_j$ and $\mathbf v_{i,j}\notin \mathcal Q$. For $\mathbf w'$, however, there are two possible reasons not to be shifted. The first one is that $w_i\ge w_j$ and, thus, $\mathbf w = \mathbf w_{i,j}$. Then
$$\langle \mathbf v',\mathbf w'\rangle - \langle \mathbf v,\mathbf w\rangle = v_iw_j+v_jw_i-v_iw_i-v_jw_j = (v_j-v_i)(w_i-w_j)\ge 0.$$
The second possible reason is that $w_i<w_j$, but  $\mathbf w_{i,j}\in\mathcal Q$. Then $$\langle \mathbf v',\mathbf w'\rangle = \langle \mathbf v_{i,j},\mathbf w\rangle = \langle \mathbf v,\mathbf w_{i,j}\rangle.$$
The last scalar product is, in fact, between two vectors from $\mathcal Q$. Therefore, in all cases we have exhibited a pair of vectors from $\mathcal Q$, that have a scalar product smaller than or equal to $\langle \mathbf v',\mathbf w'\rangle$. \end{proof}

Applied to our case, the lemma states that, given an intersecting family of vectors, we may replace it with a shifted family of vectors, and the shifted family is intersecting as well.

\subsection{Shadows}\label{sec322}
Given a family $\mathcal F\subset {[n]\choose k}$, we define its shadow $\sigma(\mathcal F)\subset {[n]\choose k-1}$ as a family of all $(k-1)$-element sets that are contained in one of the sets from $\mathcal F$. More generally, if $l<k$, then the $l$-th shadow $\sigma^l(\mathcal F)$ is the set of all $(k-l)$-element sets that are contained in one of $F\in \mathcal F$. The famous Kruskal-Katona theorem \cite{Ka}, \cite{Kr} gives a sharp lower bound on the size of the shadow of $\mathcal F$ in terms of $k$ and $|\mathcal F|$.  We are going to discuss it in the forthcoming paragraphs.


However, we need an analogous relation for the set system and its shadow, but for sets of specific type. Fix a cyclic permutation $\pi$ of $[n]$. Consider the set $\mathcal U(\pi,k)\subset {[n]\choose k}$ of $n$ $k$-sets, each of which forms an interval in the permutation $\pi$. That is, they are composed of cyclically  consecutive elements in the permutation.
\begin{lem}\label{lemshad} For any set system $\mathcal F\subset \mathcal U(\pi, k)$ we have $|\sigma^l(\mathcal F)\cap \mathcal U(\pi,k-l)|\ge \min\{|\mathcal F|+l, n\}$.
\end{lem}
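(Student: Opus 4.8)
The plan is to argue directly about intervals in the cyclic permutation $\pi$, which is much more combinatorial than the general Kruskal--Katona setting. By relabelling we may assume $\pi = (1,2,\ldots,n)$, so $\mathcal U(\pi,k)$ consists of the $n$ cyclic intervals $[i,i+k-1]$ (indices mod $n$), and $\mathcal U(\pi,k-l)$ of the $n$ cyclic intervals of length $k-l$. First I would reduce to the case $l=1$ and then iterate: if $|\sigma(\mathcal F)\cap\mathcal U(\pi,k-1)|\ge\min\{|\mathcal F|+1,n\}$ always holds, then applying it $l$ times (noting that each $\sigma(\mathcal F)\cap\mathcal U(\pi,k-1)$ is again a subfamily of a $\mathcal U$-family for the shorter interval length) gives $|\sigma^l(\mathcal F)\cap\mathcal U(\pi,k-l)|\ge\min\{|\mathcal F|+l,n\}$, since once we reach $n$ the bound is saturated and monotone. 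So the crux is the single-step statement.

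For the single step, observe that a length-$k$ interval $[i,i+k-1]$ contains exactly two length-$(k-1)$ cyclic intervals: $[i,i+k-2]$ and $[i+1,i+k-1]$. Identify each length-$k$ interval with its starting point $i\in\mathbb Z_n$ and each length-$(k-1)$ interval with its starting point $j\in\mathbb Z_n$; then the interval starting at $i$ casts shadow onto the $(k-1)$-intervals starting at $i$ and at $i+1$. Thus if $S\subseteq\mathbb Z_n$ is the set of starting points of $\mathcal F$, the relevant shadow corresponds to $S\cup(S+1)$, and the claim becomes the elementary fact that $|S\cup(S+1)|\ge\min\{|S|+1,n\}$ for any nonempty $S\subseteq\mathbb Z_n$. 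This is immediate: writing $S$ as a union of maximal ``runs'' of consecutive elements of $\mathbb Z_n$, each run of length $r$ contributes a block of length $r+1$ in $S\cup(S+1)$, and these enlarged blocks either remain disjoint --- giving a gain of at least one per run, hence $|S\cup(S+1)|\ge|S|+(\text{number of runs})\ge|S|+1$ --- or they merge, which can only happen when the gaps have been filled, i.e. when $S\cup(S+1)=\mathbb Z_n$.

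I would then iterate this: set $\mathcal F_0=\mathcal F$ and $\mathcal F_{t+1}=\sigma(\mathcal F_t)\cap\mathcal U(\pi,k-t-1)$, so $|\mathcal F_{t+1}|\ge\min\{|\mathcal F_t|+1,n\}$, and induction on $t$ gives $|\mathcal F_l|=|\sigma^l(\mathcal F)\cap\mathcal U(\pi,k-l)|\ge\min\{|\mathcal F|+l,n\}$. The only mild subtlety to check is that $\sigma(\mathcal F_t)\cap\mathcal U(\pi,k-t-1)$ really equals the one-step shadow \emph{within} the $\mathcal U$-structure — i.e. that every length-$(k-t-1)$ cyclic interval lying inside some member of $\mathcal F_t$ is obtained as one of the two sub-intervals of a length-$(k-t)$ interval in $\mathcal F_t$ — which is clear since any shorter cyclic interval inside a length-$(k-t)$ interval is one of its two immediate length-$(k-t-1)$ sub-intervals only after one step, so the iteration is the correct object. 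I expect the main (very minor) obstacle to be purely bookkeeping: handling the wrap-around cleanly and making sure the ``$\min$ with $n$'' is propagated correctly through the induction once the shadow has grown to all of $\mathcal U(\pi,\cdot)$. There is no analytic difficulty here; the whole lemma is a clean discrete interval-counting argument.
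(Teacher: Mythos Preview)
Your proposal is correct and follows essentially the same route as the paper: both reduce to the single step $l=1$, decompose $\mathcal F$ (equivalently, the set $S$ of starting points) into maximal cyclic runs, observe that each run's shadow inside $\mathcal U(\pi,k-1)$ has size one larger while the shadows of different runs stay disjoint, and then iterate $l$ times. Your identification of intervals with their starting points, turning the shadow step into $S\mapsto S\cup(S+1)$, is simply a clean repackaging of the paper's run-by-run count.
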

\begin{proof} It is clearly sufficient to prove that $|\sigma(\mathcal F)\cap \mathcal U(\pi,k-1)|\ge \min\{|\mathcal F|+1, n\}$. If $|\mathcal F| = |\mathcal U(\pi,k)| = n$, then $\sigma(\mathcal F) = \mathcal U(\pi,k-1)$ and the statement is obvious. Therefore, we may assume that $\mathcal U(\pi,k)\setminus \mathcal F\ne\emptyset$.

Split the family $\mathcal F$ into subfamilies $\mathcal F_1,\ldots, \mathcal F_s$, each of which form an ``interval'' (or a ``tight path''). That is, each $\mathcal F_i$ is a maximal sequence of different sets $F^1_i,\ldots,F^d_i\in \mathcal F$, in which each pair of consecutive sets intersect in a $(k-1)$-element set. Clearly, this is a partition of $\mathcal F$ into equivalence classes.
Moreover, sets from different subfamilies intersect in less than $k-1$ elements.  Therefore, $\sigma(\mathcal F) = \bigsqcup_i \sigma(\mathcal F_i)$.

For each subfamily we have $|\sigma(\mathcal F_i)\cap \mathcal U(\pi,k-1)| = |\mathcal F_i|+1$. This is due to the fact that each $F\in \mathcal F_i$ contains two $(k-1)$-element sets from $\mathcal U(\pi,k-1)$, while each $F'\in \sigma(\mathcal F_i)\cap \mathcal U(\pi,k-1)$ is contained in either one or two sets from $\mathcal F_i$, and there are exactly two sets that are contained in one set from $\mathcal F_i$. Informally speaking, these are the ``left shadow'' of $F^1_i$ and the ``right shadow'' of $F^d_i$. These two shadow sets are different, since $\mathcal U(\pi,k)\setminus \mathcal F_i\ne \emptyset$. Knowing the ``degrees'' of the sets, we get the desired equality by simple double counting.

Finally, putting the statements for different $i$ together, we get that $|\sigma(\mathcal F)\cap \mathcal U(\pi,k-1)|\ge |\mathcal F|+s\ge |\mathcal F|+1.$ Repeating the argument $l$ times yields the result.
\end{proof}

\subsection{General form of Katona's circle method}\label{sec43}
For this subsection only we adopt the language of graph theory. Consider a graph $G = (V,E)$, which is vertex-transitive. That is, the group $Aut(G)$ of automorphisms acts transitively on $V$. For a given vertex $v\in V$ we denote by $S_v$ the stabilizer of $v$ in $Aut(G)$, which is a subgroup of all automorphisms of $G$ that map $v$ to itself. A basic observation in group theory states that the size of the stabilizer is the same for all the vertices  of $G$. Indeed, if $v, w$ are two vertices of $G$ and $\sigma\in Aut(G)$ maps $v$ to $w$, then $S_v = \sigma^{-1} S_w \sigma$ and, therefore, $|S_w| = |S_v|$. Moreover, $|S_v| = |S_{vw}|,$ where $S_{vw}$ is the set of elements of $Aut(G)$ that maps $v$ into $w$. We have as well $|Aut(G)| = |G||S_v|,$ where $|G|$ is the number of vertices in $G$.

We remind the reader that $\alpha (G)$ is the independence number of $G$, that is, the maximum number of vertices that are pairwise non-adjacent. The following lemma is a special case of Lemma 1 from \cite{Ka2}.
\begin{lem}[Katona, \cite{Ka2}]\label{lem66} Let $G$ be a vertex-transitive graph. Let $H \subset G$ be a subgraph of $G$. Then $\alpha(G)\le \frac{\alpha(H)}{|H|}|G|.$
\end{lem}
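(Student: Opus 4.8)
The plan is to run the standard averaging (``circular'') argument, but with the single circle replaced by the arbitrary subgraph $H$ transported around $G$ by its automorphism group. Fix a maximum independent set $I\subseteq V(G)$, so $|I|=\alpha(G)$, and consider all the subgraphs $\sigma(H)$, $\sigma\in Aut(G)$. Since each $\sigma$ is an automorphism of $G$, every $\sigma(H)$ is a subgraph of $G$ isomorphic to $H$; in particular $\alpha(\sigma(H))=\alpha(H)$ and $\sigma(H)$ has exactly $|H|$ vertices. The key local observation is that for every $\sigma$ the set $I\cap V(\sigma(H))$ is independent in $\sigma(H)$ — it is already independent in $G$, and $\sigma(H)$ carries no edges outside those of $G$ — whence
$$|I\cap V(\sigma(H))|\le \alpha(H)\qquad\text{for every }\sigma\in Aut(G).$$

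Next I would sum this inequality over all $\sigma\in Aut(G)$ and compute the left-hand side by interchanging the order of summation:
$$\sum_{\sigma\in Aut(G)}|I\cap V(\sigma(H))|=\sum_{v\in I}\bigl|\{\sigma\in Aut(G): v\in V(\sigma(H))\}\bigr|.$$
For a fixed $v\in I$, the condition $v\in V(\sigma(H))$ means $\sigma(w)=v$ for a vertex $w$ of $H$, and this $w$ is uniquely determined since $\sigma$ is injective; hence $\{\sigma: v\in V(\sigma(H))\}$ is the disjoint union over $w\in V(H)$ of the sets $\{\sigma:\sigma(w)=v\}$. By vertex-transitivity each such set has size $|S_v|$ — this is exactly the identity $|S_v|=|S_{vw}|$ recorded before the lemma — so the inner count equals $|H|\cdot|S_v|$, the same for every $v$. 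Therefore the left-hand side equals $|I|\cdot|H|\cdot|S_v|$, while the right-hand side of the summed inequality is $|Aut(G)|\cdot\alpha(H)=|G|\cdot|S_v|\cdot\alpha(H)$, using $|Aut(G)|=|G||S_v|$.

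Combining the two evaluations gives $|I|\cdot|H|\cdot|S_v|\le|G|\cdot|S_v|\cdot\alpha(H)$, and cancelling $|S_v|$ yields $\alpha(G)=|I|\le\frac{\alpha(H)}{|H|}|G|$, as claimed. There is no genuine obstacle here; the argument is a clean double count. The one place that deserves care is the enumeration of automorphisms carrying a vertex of $H$ onto $v$: one must use both that $\sigma$ is a bijection (so the union over $w\in V(H)$ is disjoint) and that $|\{\sigma:\sigma(w)=v\}|=|S_v|$ for every $w$, both of which are furnished by the orbit–stabilizer discussion preceding the statement. A second point worth stating explicitly is that $I$ must be independent in all of $G$, not merely in $H$, for the bound $|I\cap V(\sigma(H))|\le\alpha(H)$ to hold simultaneously for every $\sigma$.
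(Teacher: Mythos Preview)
Your proof is correct and follows essentially the same double-counting argument as the paper: average $|I\cap\sigma(V(H))|$ over all $\sigma\in Aut(G)$, bound each term by $\alpha(H)$, and use orbit--stabilizer to see that every vertex of $G$ is covered exactly $|H|\cdot|S_v|$ times. The paper compresses the count into one displayed line, while you spell out the interchange of summation and the disjointness of the fibres $\{\sigma:\sigma(w)=v\}$ more explicitly, but there is no substantive difference.
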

\textbf{Remark. } We formulated the lemma for the independence number, since it meets our demands. However, an analogue of it may be formulated for some other graph characteristics.
\begin{proof} For any $\sigma \in Aut(G)$ we denote an induced subgraph of $G$ on the set of vertices $\sigma(V(H)) = \{\sigma(v): v\in V(H)\}$ by $\sigma(H)$. The proof of the lemma goes by simple double counting. Before doing the crucial double counting step, we remark that the union over all $\sigma \in Aut(G)$ of $\sigma(V(H))$ covers each vertex exactly $|H||S_v|$ times. Take any independent set $I$ in $G$.
\begin{equation}\label{eq66} |I| = \frac 1{|H||S_v|}\sum_{\sigma \in Aut(G)} |I\cap \sigma(H)|\le \frac 1{|H||S_v|}\sum_{\sigma \in Aut(G)} \alpha(H) = \frac{|Aut(G)|}{|S_v|}\frac{\alpha(H)}{|H|} = \frac{\alpha(H)}{|H|}|G|.\end{equation}
\end{proof}

There is a natural connection between this lemma and intersecting families, which goes via Kneser graphs. A \textit{Kneser graph} $KG_{n,k}$ is graph which set of vertices is ${[n]\choose k}$, and two vertices are adjacent iff the corresponding sets are disjoint. By definition the value of $\alpha(KG_{n,k})$ is the size of a maximum intersecting family in ${[n]\choose k}$. Using Lemma\label{lem66} for the so-called Katona's circle, it is not difficult to show that the independence number of $KG_{n,k}$ is equal to ${n-1\choose k-1}$, which is the statement of the Erd\H os-Ko-Rado theorem.

\subsection{An inequality for cross-intersecting families of sets}\label{sec44}
In this subsection we prove a theorem about cross-intersecting families that we need for the proof of Theorem \ref{thmbig}. We say that two families $\mathcal A,\ \mathcal B$ are {\it cross-intersecting}, if for any $B\in \mathcal B, A\in \mathcal A$ we have $B\cap A\ne \emptyset$.

\begin{thm}\label{thmcross} Let $n\ge 2k$,  $c\ge 1$.  Consider two cross-intersecting families $\mathcal A,\ \mathcal B$, where $\mathcal B\subset\mathcal A\subset {[n]\choose k}$.  Then
\begin{equation}\label{eqcross} |\mathcal A|+c|\mathcal B|\le \max\Bigl\{{n\choose k}, (c+1){n-1\choose k-1}\Bigr\}.
\end{equation}
\end{thm}
\textbf{Remark.} Informally speaking, the theorem states that the sum is maximized in one of the two cases: either $\mathcal B$ is empty and we may take $\mathcal A$ to be ${[n]\choose k}$, or when $\mathcal A = \mathcal B$, and each of them is a trivial intersecting family, that is, in which all sets contain a fixed element.
\begin{proof} The proof is an application of Katona's cyclic permutation method. The following proposition is the key step.
\begin{prop}\label{prop2} Fix a  cyclic permutation $\pi$ of $[n]$. Consider the set $\mathcal U(\pi,k)\subset {[n]\choose k}$ from Section \ref{sec322}. Consider two  subfamilies $\mathcal A(\pi)= \mathcal A\cap \mathcal U(\pi,k)$ and $\mathcal B(\pi) = \mathcal B\cap \mathcal U(\pi,k)$. Denote $a = |\mathcal A(\pi)|, b = |\mathcal B(\pi)|$. Then

\begin{equation}\label{eq3} a+cb\le \max \{n, (c+1)k\}.\end{equation}
\end{prop}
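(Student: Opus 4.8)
The plan is to analyze the structure of an interval family on a cycle together with a sub-family that is cross-intersecting with it, and to extract enough arithmetic relations among the relevant parameters to push through the bound $a + cb \le \max\{n, (c+1)k\}$. First I would dispose of the trivial extreme: if $b = 0$, then $a \le |\mathcal U(\pi,k)| = n$ and we are done. So assume $b \ge 1$ and fix some $B_0 \in \mathcal B(\pi)$, viewed as an interval $\{j+1, j+2, \ldots, j+k\}$ in the cyclic order of $\pi$. Since every $A \in \mathcal A(\pi)$ is a $k$-element interval meeting the fixed $k$-interval $B_0$, the left endpoint of $A$ must lie in a window of at most $2k-1$ consecutive positions; this already gives $a \le 2k-1$, and hence $a + cb \le (2k-1) + cb$. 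The point is that $b$ is forced to be small too, and the two quantities trade off against each other.

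Next I would bound $b$ in terms of $a$, or rather bound the pair jointly. The key observation is that every $B \in \mathcal B(\pi)$ is itself an interval, and $\mathcal B(\pi) \subset \mathcal A(\pi)$, so $\mathcal B(\pi)$ is an interval sub-family of $\mathcal A(\pi)$ all of whose members pairwise intersect. Among $k$-intervals on a cycle of length $n \ge 2k$, a pairwise-intersecting family has size at most $k$: two $k$-intervals on an $n$-cycle with $n \ge 2k$ are disjoint unless their left endpoints are within $k-1$ of each other, and one checks that any set of $k$-intervals with pairwise-intersecting property has left endpoints contained in a block of $k$ consecutive positions. Hence $b \le k$. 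Combined with $a \le 2k-1 < n$ this already yields $a + cb \le (2k-1) + ck$, which is not quite $(c+1)k = ck + k$ — it is off by $k-1$. So the crude bounds $a \le 2k-1$ and $b \le k$ are individually too weak and must be used simultaneously.

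The real argument is therefore a case analysis on $b$. If $b = k$, then $\mathcal B(\pi)$ consists of all $k$ intervals through some fixed point $x$ (the common element of a maximum intersecting interval family), and then every $A \in \mathcal A(\pi)$, being an interval meeting all of these, must also contain $x$ — so $a \le k$ as well, giving $a + cb \le k + ck = (c+1)k$, as desired. If $b < k$, I would argue that $a \le (c+1)k - cb$ by a shadow-type / endpoint-counting argument: writing $\mathcal B(\pi)$ as a union of maximal "runs" of intervals sharing $(k-1)$-subsets, one shows that the left endpoints of members of $\mathcal A(\pi)$ are confined to roughly $k + b$ positions (intuitively, the $b$ intervals of $\mathcal B(\pi)$ span $b$ extra positions beyond a single interval, and an $\mathcal A$-interval must meet the union), and then one balances $a \le k + b$ against the coefficient $c \ge 1$ to conclude $a + cb \le k + b + cb = k + (c+1)b \le k + (c+1)(k-1) \le \ldots$ — wait, that over-counts, so the honest version needs the sharper claim $a + b \le \text{(something like)}\ 2k$ coupling the two, or equivalently that $\mathcal A(\pi)$ and $\mathcal B(\pi)$ cannot both be large. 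Concretely, I expect the clean statement to be: either $b \le k$ and $a \le k + (k - b)$ giving $a + cb \le 2k - b + cb \le 2k + (c-1)(k-1)$...

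Let me instead state the plan at the right level: the crux is to prove that $a + b \le n$ always (easy, from $\mathcal A(\pi), \mathcal B(\pi)$ being disjoint-indexed inside $\mathcal U(\pi,k)$ — no, they are nested), and separately that if $b \ge 1$ then $a \le 2k - 1$ while simultaneously $b \le k$ with the refinement that $a + cb$ is maximized at an endpoint of the feasible region, which is either $(a,b) = (n, 0)$ or $(a,b) = (k,k)$. The main obstacle, and the step I expect to require the most care, is precisely ruling out the "mixed" configurations where $\mathcal B(\pi)$ is a moderate-size interfrom family (say $1 \le b \le k-1$) and $\mathcal A(\pi)$ is as large as the cross-intersecting constraint allows: one must show that the constraint "$A$ meets every $B \in \mathcal B(\pi)$" shrinks the allowed window for $A$ by exactly the amount needed so that $a + cb \le (c+1)k$. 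This is a finite combinatorial optimization over cyclic intervals, and I would carry it out by parametrizing everything by left endpoints, reducing to an inequality of the form $a \le 2k - b$ (or similar) whenever $b \ge 1$, from which $a + cb \le 2k - b + cb = 2k + (c-1)b \le 2k + (c-1)k = (c+1)k$ follows since $b \le k$ and $c \ge 1$. Verifying $a \le 2k - b$ — that the more intervals $\mathcal B(\pi)$ contains, the fewer intervals can be cross-intersecting with all of them — is the combinatorial heart of the matter.
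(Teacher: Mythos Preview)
You correctly isolate the decisive inequality: once $b \ge 1$, the bound $a \le 2k - b$ together with $b \le a$ forces $b \le k$, and then $a + cb \le 2k - b + cb = 2k + (c-1)b \le (c+1)k$. This is precisely the paper's computation. The gap is that you never actually prove $a \le 2k - b$; you announce it as ``the combinatorial heart of the matter'' and gesture at a left-endpoint parametrization without carrying it out. Worse, the one case you do argue in detail is flawed: for $b = k$ you claim that $\mathcal B(\pi)$ must be the star of all $k$-intervals through a common point $x$. This fails when $n = 2k$ (which is allowed in Theorem~\ref{thmcross}): with $n = 6$, $k = 3$, the family $\{\{1,2,3\},\{3,4,5\},\{5,6,1\}\}$ is pairwise intersecting of size $k = 3$ yet has empty common intersection. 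The conclusion $a \le k$ still holds in that example, but not via your reasoning, and the direct endpoint/arc picture you sketch does not straightforwardly accommodate such ``spread out'' cyclic configurations.

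The paper's proof of $a \le 2k - b$ is short and bypasses all of this. Pass to complements: for $B \in \mathcal B(\pi)$ the set $\bar B$ is an $(n-k)$-interval in $\mathcal U(\pi,n-k)$, and the condition $A \cap B \ne \emptyset$ is exactly $A \not\subset \bar B$, i.e.\ $A \notin \sigma^{n-2k}(\overline{\mathcal B}(\pi))$. The cyclic shadow bound of Lemma~\ref{lemshad} gives $\bigl|\sigma^{n-2k}(\overline{\mathcal B}(\pi)) \cap \mathcal U(\pi,k)\bigr| \ge b + (n-2k)$ (the alternative value $n$ would force $\mathcal A(\pi) = \emptyset$, contradicting $\mathcal B(\pi) \subset \mathcal A(\pi)$ and $b \ge 1$). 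Hence at least $n - 2k + b$ of the $n$ intervals are forbidden for $\mathcal A(\pi)$, and $a \le 2k - b$ follows. This complement-plus-shadow step is the missing ingredient in your plan; once you have it, your final paragraph becomes a complete proof.
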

\begin{proof}
If the set $\mathcal B(\pi)$ is empty, then the statement is trivial, since $a\le |\mathcal U(\pi,k)| = n$. Henceforth we assume that $|\mathcal B(\pi)| = s>0$.  We pass to the complements of the sets from $\mathcal B(\pi)$, considering the set $\overline{\mathcal B}(\pi) = \{\bar B: B\in \mathcal B(\pi)\}.$ On the one hand, we know that for each $A\in \mathcal A(\pi)$ and $\bar B\in \overline{\mathcal B}(\pi)$ we have $A\nsubseteq \bar B$. In other words, $A\notin \sigma^{n-2k}\bigl(\overline{\mathcal B}(\pi)\bigr).$ On the other hand, by Lemma \ref{lemshad} we have $\bigl|\sigma^{n-2k}\bigl(\overline{\mathcal B}(\pi)\bigr)\bigr|\ge \min\{|\mathcal B(\pi)|+n-2k, n\} = n-2k+s,$ for if $\bigl|\sigma^{n-2k}\bigl(\overline{\mathcal B}(\pi)\bigr)\bigr| = n$, then $\mathcal A(\pi)$ and, consequently, $\mathcal B(\pi)$ is forced to be empty.

Combining these two facts, we get $|\mathcal A(\pi)| \le n - (n-2k+s) = 2k-s$. From the following chain  $s = |\mathcal B(\pi)|\le |\mathcal A(\pi)| \le 2k-s$ we conclude that $s\le k$.  Finally,
$$|\mathcal A(\pi)|+c|\mathcal B(\pi)|\le 2k-s+cs = 2k+(c-1)s\le (c+1)k.$$
\vskip-0.5cm\end{proof}

Knowing Proposition \ref{prop2}, the rest of the proof of the lemma is a standard double-counting argument, which was, in fact, carried out in the proof of Lemma \ref{lem66}. We take the circle $U(\pi,k)$ as a subgraph $H$ from Lemma \ref{lem66}. In parallel to (\ref{eq66}), we get
$$|\mathcal A| +c |\mathcal B|\le \max \{n, (c+1)k\}\frac 1 n {n\choose k} =  \max\Bigl\{{n\choose k}, (c+1){n-1\choose k-1}\Bigr\}.$$
\vskip-0.5cm
\end{proof}

\subsection{Analysis of the Kruskal-Katona's Theorem}\label{sec45}
We note that the ideas of this section have been applied in the context of intersecting families, see \cite{Kup17}.

 For $i\le j$ denote $[i,j] = \{i,i+1,\ldots, j\}$. We introduce a lexicographical order $<$ on the sets from ${[n]\choose k}$ by setting $A<B$ iff either $B \subset A$ or the minimal element of $A\setminus B$ is less than the minimal element of $B\setminus A$. We  note that, in particular, $A<A$.

 For $0\le m\le {n\choose k}$ let $\mathcal L(m,k)$ be the collection of $m$ largest $k$-sets with respect to this order. In the proof we are going to use the famous Kruskal-Katona Theorem \cite{Ka}, \cite{Kr} in the form due to Hilton \cite{Hil}:

\begin{thm}\label{thmHil}Suppose that $\mathcal A\subset {[n]\choose a}, \mathcal B\subset {[n]\choose b}$ are cross-intersecting. Then the same holds for the families $\mathcal L(|\mathcal A|,a),\mathcal L(|\mathcal B|, b).$
\end{thm}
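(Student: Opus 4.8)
The plan is to deduce Theorem~\ref{thmHil} from the Kruskal-Katona theorem by a complementation argument that compresses the two families one at a time. For $\mathcal A\subset\binom{[n]}{a}$ write $\overline{\mathcal A}=\{[n]\setminus A:A\in\mathcal A\}\subset\binom{[n]}{n-a}$, and let
$$g(\mathcal A)=\Bigl\{B\in\binom{[n]}{b}:A\cap B\neq\emptyset\text{ for every }A\in\mathcal A\Bigr\}$$
be the (unique) largest family cross-intersecting with $\mathcal A$. The first, purely set-theoretic, step is the identity $g(\mathcal A)=\binom{[n]}{b}\setminus\sigma^{n-a-b}(\overline{\mathcal A})$: a $b$-set $B$ misses some $A\in\mathcal A$ iff $B\subseteq[n]\setminus A$ for some such $A$, i.e.\ iff $B$ lies in the $(n-a-b)$-th shadow of $\overline{\mathcal A}$.

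The core of the proof is then two consequences of Kruskal-Katona, phrased in the language of the order of Section~\ref{sec45}. \emph{Minimality}: among all $\mathcal A\subset\binom{[n]}{a}$ of a fixed size, $|\sigma^{n-a-b}(\overline{\mathcal A})|$ is smallest when $\mathcal A=\mathcal L(|\mathcal A|,a)$; equivalently $|g(\mathcal A)|\le|g(\mathcal L(|\mathcal A|,a))|$. \emph{Structure}: for $\mathcal A=\mathcal L(m,a)$ the shadow $\sigma^{n-a-b}(\overline{\mathcal A})$ is again an extremal segment of the same kind inside $\binom{[n]}{b}$, so that $g(\mathcal L(m,a))$ equals some $\mathcal L(m',b)$. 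Granting these, the theorem is immediate: if $\mathcal A,\mathcal B$ are cross-intersecting then $\mathcal B\subseteq g(\mathcal A)$, so $|\mathcal B|\le|g(\mathcal A)|\le|g(\mathcal L(|\mathcal A|,a))|=m'$; by the structure statement $g(\mathcal L(|\mathcal A|,a))=\mathcal L(m',b)$, and since the families $\mathcal L(\cdot,b)$ are nested and $|\mathcal B|\le m'$, we get $\mathcal L(|\mathcal B|,b)\subseteq\mathcal L(m',b)=g(\mathcal L(|\mathcal A|,a))$. By the definition of $g$ this means every set of $\mathcal L(|\mathcal B|,b)$ meets every set of $\mathcal L(|\mathcal A|,a)$, i.e.\ $\mathcal L(|\mathcal A|,a)$ and $\mathcal L(|\mathcal B|,b)$ are cross-intersecting.

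The step I expect to be the main obstacle is proving \emph{Minimality} and \emph{Structure}, because Kruskal-Katona is usually stated for colexicographic initial segments and \emph{lower} shadows, whereas here we have the order $<$ of Section~\ref{sec45} together with complementation. The bookkeeping I would carry out: (a) $X\mapsto[n]\setminus X$ is an order-anti-isomorphism, so it sends $\mathcal L$-segments of $\binom{[n]}{a}$ to final segments of $\binom{[n]}{n-a}$; (b) the relabelling $i\mapsto n+1-i$ turns $<$ into (a reflection of) the colex order, converting those final segments into colex initial segments; (c) now invoke Kruskal-Katona verbatim --- colex initial segments minimize the iterated shadow and the iterated shadow of a colex initial segment is again a colex initial segment; (d) push the conclusions back through (b) and (a). Each individual claim is routine, but one must track every order reversal carefully, and the degenerate case $n=a+b$ (where $\sigma^{0}$ is the identity and $g(\mathcal A)$ is literally the complement of $\overline{\mathcal A}$) should be checked separately. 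A more hands-on alternative would be to first show, by a short case analysis like that of Lemma~\ref{lem2}, that the $(i,j)$-shift preserves the cross-intersecting property, reduce to shifted $\mathcal A,\mathcal B$, and then induct on $n$ by splitting each family according to whether its sets contain the element $n$; I expect the complementation route above to be cleaner to write up.
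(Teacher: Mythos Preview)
The paper does not give its own proof of Theorem~\ref{thmHil}; it is quoted as a known result (the Kruskal--Katona theorem in Hilton's formulation, with a citation to \cite{Hil}) and used as a black box thereafter. So there is nothing in the paper to compare your argument against.

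That said, your proposal is correct and is exactly the standard derivation of Hilton's statement from Kruskal--Katona. The identity $g(\mathcal A)=\binom{[n]}{b}\setminus\sigma^{n-a-b}(\overline{\mathcal A})$ is right (under the running assumption $a+b\le n$), and the two facts you isolate---\emph{Minimality} and \emph{Structure}---are precisely the content of Kruskal--Katona once the order bookkeeping is done. Your outline of that bookkeeping is accurate: complementation is an order anti-isomorphism for the paper's order, and the relabelling $i\mapsto n+1-i$ converts between the paper's lex-type order and the usual colex order, so that initial segments in one picture become colex initial segments in the other; Kruskal--Katona then gives both that colex initial segments minimise iterated shadows and that the iterated shadow of a colex initial segment is again a colex initial segment. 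Pushing these back through the two bijections yields exactly what you need. The degenerate case $n=a+b$ is indeed trivial (the ``shadow'' is the identity and the statement reduces to the observation that complements of an $\mathcal L$-segment form the complement of an $\mathcal L$-segment).

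One small caution: the paper's phrasing ``the $m$ largest sets'' for $\mathcal L(m,k)$ appears to be a slip---from the definition of $\mathcal L(S,t)$ and every subsequent use (e.g.\ in the proof of Theorem~\ref{thmcross2}, where $v_1=1$ is forced), $\mathcal L(m,k)$ is in fact the $m$ \emph{smallest} sets in the order~$<$, i.e.\ a lex initial segment. Your argument is written for that reading, which is the correct one. The alternative route you mention at the end (show shifting preserves cross-intersection, reduce to shifted families, induct on $n$) is also a well-known proof and would work equally well.
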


We may demonstrate the power of Theorem \ref{thmHil} immediately, proving the following corollary of it and Theorem \ref{thmcross}:

\begin{cor}\label{corcross} The statement of Theorem \ref{thmcross} holds even if we replace the condition $\mathcal B\subset \mathcal A$ by $\mathcal B\subset {[n]\choose k}, |\mathcal B|\le |\mathcal A|$.
\end{cor}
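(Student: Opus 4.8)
The plan is to reduce the general case $\mathcal B\subset\binom{[n]}{k}$, $|\mathcal B|\le|\mathcal A|$ to the already-established case $\mathcal B\subset\mathcal A$ by replacing $\mathcal A$ and $\mathcal B$ with the lexicographically largest families of the same cardinalities, and then showing that for lex-families the containment $\mathcal B\subset\mathcal A$ comes for free.

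First I would observe that the hypothesis that $B\cap A\neq\emptyset$ for all $B\in\mathcal B,A\in\mathcal A$ is precisely the statement that $\mathcal A$ and $\mathcal B$ are cross-intersecting (as families of $k$-sets). By Theorem \ref{thmHil}, the families $\mathcal A':=\mathcal L(|\mathcal A|,k)$ and $\mathcal B':=\mathcal L(|\mathcal B|,k)$ are again cross-intersecting, and of course $|\mathcal A'|=|\mathcal A|$, $|\mathcal B'|=|\mathcal B|$, so the quantity $|\mathcal A|+c|\mathcal B|$ is unchanged. Hence it suffices to bound $|\mathcal A'|+c|\mathcal B'|$.

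Next comes the key point: since $|\mathcal B'|=|\mathcal B|\le|\mathcal A|=|\mathcal A'|$ and $\mathcal L(m,k)$ is a nested family of sets (the $m$ largest in a fixed linear order), we automatically get $\mathcal B'=\mathcal L(|\mathcal B'|,k)\subseteq\mathcal L(|\mathcal A'|,k)=\mathcal A'$. So the pair $(\mathcal A',\mathcal B')$ satisfies all the hypotheses of Theorem \ref{thmcross} in its original form — $\mathcal B'\subset\mathcal A'$ and, being cross-intersecting with a superset containing it, the condition $B'\cap A'\neq\emptyset$ for $B'\in\mathcal B', A'\in\mathcal A'$ holds. Applying Theorem \ref{thmcross} to $(\mathcal A',\mathcal B')$ yields
\begin{equation*}
|\mathcal A|+c|\mathcal B|=|\mathcal A'|+c|\mathcal B'|\le\max\Bigl\{\binom{n}{k},(c+1)\binom{n-1}{k-1}\Bigr\},
\end{equation*}
which is exactly the desired inequality.

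The argument is essentially free once Theorem \ref{thmHil} is in hand, so there is no serious obstacle; the only thing that needs a moment's care is checking that being cross-intersecting with $\mathcal A'$ together with $\mathcal B'\subset\mathcal A'$ really does give $B'\cap A'\neq\emptyset$ for every pair in $\mathcal A'$ (not just pairs with one member in $\mathcal B'$) — but that is not needed: Theorem \ref{thmcross} as stated only requires the cross-condition between $\mathcal B$ and $\mathcal A$, which we have directly from the cross-intersecting property of $(\mathcal A',\mathcal B')$. One should also double-check the degenerate cases $|\mathcal B|=0$ or $|\mathcal B|=|\mathcal A|$ are handled uniformly, but these are subsumed.
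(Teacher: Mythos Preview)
Your proof is correct and follows exactly the same route as the paper's: pass to the lexicographic families $\mathcal L(|\mathcal A|,k)$ and $\mathcal L(|\mathcal B|,k)$ via Theorem~\ref{thmHil}, observe that $|\mathcal B|\le|\mathcal A|$ forces the containment $\mathcal L(|\mathcal B|,k)\subset\mathcal L(|\mathcal A|,k)$, and then apply Theorem~\ref{thmcross}. The paper's version is simply a more terse statement of the same argument.
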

\begin{proof} The proof is straightforward. One has to pass to the families $\mathcal L(|\mathcal A|,k),\mathcal L(|\mathcal B|, k)$. Then the condition $|\mathcal B|\le |\mathcal A|$ is equivalent to $\mathcal L(|\mathcal B|,k)\subset \mathcal L(|\mathcal A|, k)$. After we just have to apply Theorem \ref{thmcross} to the families $\mathcal L(|\mathcal A|,k),\mathcal L(|\mathcal B|, k)$.\end{proof}

To avoid trivialities, for the whole section we assume that $a+b\le n$.

We say, that two sets $S$ and $T$ \textit{strongly intersect}, if they intersect and for the first $j\in S\cap T$ we have $[j]\subset S\cup T$.
Let $S$ be a finite $s$-element set and $t\ge s$ an integer. Define the set family $\mathcal L(S,t)$ by
 $$\mathcal L(S,t) = \{ T \in{[n] \choose t} : T<S\}$$


\begin{prop}\label{cross1} Let $A$ and $B$ be an $a$-element and a $b$-element set, respectively. Assume that $n\ge a+b$. Then $\mathcal L(A,a)$ and $\mathcal L(B,b)$ are cross-intersecting iff $A$ and $B$  strongly intersect.\end{prop}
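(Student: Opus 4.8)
The plan is to prove both implications together, by induction on $n$, peeling off the smallest element $1$ of the ground set. If $a=0$ or $b=0$, then one of the two families is $\{\emptyset\}$, which is cross-intersecting with no non-empty family, matching the fact that $\emptyset$ intersects nothing strongly; so we may assume $a,b\ge1$, and this also settles the base of the induction. Two preliminary observations carry the argument. (i) The only property of the lexicographic order I use: if $1\in S$, then every $T\in\mathcal L(S,t)$ contains $1$ (otherwise $1\in S\setminus T$ would give $\min(S\setminus T)=1<\min(T\setminus S)$, i.e.\ $S<T$, impossible for the distinct sets $S,T$). (ii) $A$ and $B$ intersect strongly if and only if $A\cap B\ne\emptyset$ and $[j^*]\subseteq A\cup B$, where $j^*=\min(A\cap B)$: any witness $j$ in the definition must equal $j^*$ (if $j>j^*$, then $j^*\in A\cap B\cap[j]$, contradicting $A\cap B\cap[j]=\{j\}$), and for $j=j^*$ part (1) of the definition is automatic.

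Now split into three cases by the location of $1$. If $1\in A\cap B$, then $j=1$ witnesses strong intersection, while by (i) every set of $\mathcal L(A,a)$ and every set of $\mathcal L(B,b)$ contains $1$, so the families are cross-intersecting: both sides of the equivalence hold. If $1\notin A\cup B$, then $A,B$ do not intersect strongly, since any witness $j$ would force $1\in[j]\subseteq A\cup B$; and the $a$ smallest elements of $[n]\setminus B$ form an $a$-set $T_0$ with $1\in T_0$, $T_0\cap B=\emptyset$, and $T_0<A$ (as $1\in T_0\setminus A$ while every element of the non-empty set $A\setminus T_0$ exceeds $1$), so $T_0\in\mathcal L(A,a)$ and $B\in\mathcal L(B,b)$ are disjoint: both sides fail.

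Finally, suppose $1$ lies in exactly one of $A,B$. Since both ``cross-intersecting'' and ``intersect strongly'' are symmetric in $A$ and $B$, assume $1\in A\setminus B$, and put $A'=A\setminus\{1\}$. I claim that $\mathcal L(A,a),\mathcal L(B,b)$ are cross-intersecting over $[n]$ if and only if $\mathcal L(A',a-1),\mathcal L(B,b)$ are cross-intersecting over the ground set $[2,n]$. Indeed, by (i) every $T\in\mathcal L(A,a)$ contains $1$, hence automatically meets every $U\in\mathcal L(B,b)$ with $1\in U$; the sets $U\in\mathcal L(B,b)$ with $1\notin U$ are exactly the members of the family $\mathcal L(B,b)$ formed inside $[2,n]$ (note $B\subseteq[2,n]$); for such $U$ we have $T\cap U=(T\setminus\{1\})\cap U$; and $T\mapsto T\setminus\{1\}$ is a bijection of $\mathcal L(A,a)$ onto $\mathcal L(A',a-1)$ inside $[2,n]$, because $T'\cup\{1\}<A$ is equivalent to $T'<A'$. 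The same comparison, together with (ii) and the fact that $1\notin A\cap B=A'\cap B$ forces every witness to be at least $2$, shows that $A,B$ intersect strongly over $[n]$ exactly when $A',B$ intersect strongly over $[2,n]$. Since $(a-1)+b\le n-1$, the inductive hypothesis applied on $[2,n]$ (which is order-isomorphic to $[n-1]$) finishes this case and hence the proof.

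The step I expect to be the main obstacle is this last case: one must see that the cross-intersecting property genuinely factors through the element $1$ — the sets containing $1$ being inert, and the sets avoiding it reducing cleanly to a ground set of size $n-1$ — and that the strong-intersection property factors in precisely the same fashion, so that the inductive hypothesis applies. Everything rests on the small bookkeeping fact (i) that membership of $1$ in $S$ forces $1$ into every set of $\mathcal L(S,t)$; granting that, the remaining verifications are a routine case analysis.
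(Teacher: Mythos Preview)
Your proof is correct. It differs from the paper's argument in overall structure: the paper proves each direction separately and directly, while you prove the equivalence in one stroke by induction on $n$, peeling off the element $1$.

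For the forward direction, the paper takes $j=\min(A\cap B)$ and, if some $i<j$ is missing from $A\cup B$, explicitly builds a set $C\in\mathcal L(A,a)$ with $C\cap[i]=(A\cap[i])\cup\{i\}$ and $C\cap B=\emptyset$. For the backward direction, it shows that strong intersection propagates down the lex order: if $C<A$ then $C$ and $B$ again intersect strongly (with the same witness $j$, or with the first index where $C$ and $A$ differ), and then symmetrically for $D<B$. Your approach replaces both constructions by the single observation that, once one knows every member of $\mathcal L(S,t)$ contains $1$ whenever $1\in S$, the problem over $[n]$ with $1\in A\setminus B$ is equivalent to the problem over $[2,n]$ for $A\setminus\{1\}$ and $B$; the two terminal cases $1\in A\cap B$ and $1\notin A\cup B$ are immediate.

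What each buys: the paper's argument is constructive and makes transparent \emph{why} strong intersection is exactly the right condition---it isolates the first ``gap'' in $A\cup B$ below $\min(A\cap B)$ and uses it to manufacture a disjoint pair. Your induction is slicker and handles both implications at once, at the cost of hiding that explicit witness; it also makes the symmetry between $A$ and $B$ do real work. Either argument would serve equally well here, and your reformulation (ii) of strong intersection via $j^*=\min(A\cap B)$ is in fact the same first step the paper takes.
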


\begin{proof} First suppose that $\mathcal L(A,a)$ and $\mathcal L(B,b)$ are cross-intersecting. Then $A$ and $B$ intersect. Let $j$ be the smallest integer contained in both. If $A\cup B$ contains $[j]$, then $A$ and $B$ strongly intersect. Otherwise, there exists $i<j$ such that $i\notin A\cup B$. Since $n\ge a+b$, there exists an $a$-element set $C$ satisfying $C\cap [i] = A\cap [i]\cup \{i\}$, which is \textit{disjoint} with $B$. At the same time, $C<A$, contradicting the assumption that $\mathcal L(A,a)$ and $\mathcal L(B,b)$ are cross-intersecting.

Now suppose that $A$ and $B$ strongly intersect. Let $C<A$. We claim that $C$ and $B$ strongly intersect. If $C\cap [j] = A\cap [j],$ then it follows directly from the definition. Otherwise, let $i$ be the first element where they differ. Since $C<A$, we have $i\in C\backslash A,\ i<j.$ But then $i\in C\cap B,\ [i]\subset C\cup B$. Repeating the same argument for any $D$ with $D<B$ gives that $C$ and $D$ strongly intersect. In particular, they have a non-empty intersection. Therefore, $\mathcal L(A,a)$ and $\mathcal L(B,b)$ are cross-intersecting.
\end{proof}

\begin{defn} We say that $\mathcal A\subset {[n]\choose a}$ and $\mathcal B\subset {[n]\choose b}$ form a \textit{maximal cross-intersecting pair}, if whenever $\mathcal A'\subset {[n]\choose a}$ and $\mathcal B'\subset {[n]\choose b}$ are cross-intersecting with $\mathcal A'\supset \mathcal A$ and $\mathcal B'\supset \mathcal B$, then necessarily $\mathcal A = \mathcal A'$ and $\mathcal B = \mathcal B'$ holds.
\end{defn}

Now we are in a position to prove the following strengthening of Theorem \ref{thmHil}. We believe that this proposition is of independent interest. It was definitely of great use in proving Theorem \ref{thmsmall}.

\begin{prop}\label{cross2} Let $a$ and $b$ be positive integers, $a+b\le n$. Let $P$ and $Q$ be non-empty subsets of $[n]$ with $|P|\le a$, $|Q| \le b$. Suppose that $P$ and $Q$ strongly intersect in their last element. That is, there exists $j$, such that $P\cap Q = \{j\}$ and $P\cup Q = [j]$. Then $\mathcal L(P,a)$ and $\mathcal L(Q,b)$ form a maximal pair of cross-intersecting families.

Inversely, if $\mathcal L(m,a)$ and $\mathcal L(r,b)$ form a maximal pair of cross-intersecting families, then it is possible to find sets $P$ and $Q$ such that $\mathcal L(m,a)=\mathcal L(P,a)$, $\mathcal L(r,b)=\mathcal L(Q,b)$ and $P,Q$ satisfy the above condition. \end{prop}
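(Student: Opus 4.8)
The plan is to prove the two directions separately, using Proposition~\ref{cross1} as the bridge between the combinatorial ``strong intersection'' condition and the property of being cross-intersecting, and Theorem~\ref{thmHil} to guarantee that any maximal cross-intersecting pair is, up to cardinalities, of the form $\mathcal L(m,a),\mathcal L(r,b)$.

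For the first (``if'') direction: suppose $P,Q$ intersect strongly in their last element $j$, with $P\cap Q=\{j\}$, $P\cup Q=[j]$, $|P|\le a$, $|Q|\le b$. First I would observe that $\mathcal L(P,a)$ and $\mathcal L(Q,b)$ are cross-intersecting: indeed $P$ and $Q$ intersect strongly (witnessed by $j$), so Proposition~\ref{cross1} applies directly. It remains to show maximality, i.e.\ that adding any single $a$-set $A'\notin\mathcal L(P,a)$ to $\mathcal A$ destroys the cross-intersecting property, and symmetrically for $\mathcal B$. If $A'\notin\mathcal L(P,a)$ then $A'>P$ (strictly, and $A'\ne P$ by the convention $P\notin{[n]\choose a}$ when $|P|<a$, or $A'>P$ properly when $|P|=a$), so the first place where $A'$ and $P$ differ is an element $i\in P\setminus A'$, and $A'\cap[i-1]=P\cap[i-1]$ while $i\notin A'$. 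Because $n\ge a+b$ I can then build a $b$-set $Q'$ that agrees with $Q$ on $[i]$ — note $i\le j$ and $i\notin Q$ forces... here I need to check $i\notin A'$ together with $P\cup Q=[j]$ gives $i\in Q$, hence $Q'$ can avoid $A'$ outside $[i]$ — wait, more carefully: the point is that $A'$ being lexicographically above $P$ lets $A'$ ``escape early'', and using $P\cup Q=[j]$ the complementary slack sits in $Q$, so one can exhibit a $b$-set disjoint from $A'$ lexicographically above $Q$, i.e.\ in $\mathcal L(Q,b)$; this contradicts cross-intersection with the enlarged $\mathcal A$. The symmetric argument enlarging $\mathcal B$ is identical with the roles of $P,Q$ swapped. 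I expect this disjoint-set construction — tracking exactly where the slack $n-a-b\ge 0$ is spent and using $P\cup Q=[j]$ to know which coordinates are ``free'' — to be the main obstacle, and it is essentially a refinement of the construction already used in the proof of Proposition~\ref{cross1}.

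For the second (``inversely'') direction: suppose $\mathcal L(m,a)$ and $\mathcal L(r,b)$ form a maximal cross-intersecting pair. Let $A$ be the lexicographically smallest set in $\mathcal L(m,a)$ and $B$ the lexicographically smallest set in $\mathcal L(r,b)$, so that $\mathcal L(m,a)=\mathcal L(A,a)\cup\{A\}=\mathcal L(A',a)$ for a suitable slightly larger ``threshold'' set — more cleanly, $\mathcal L(m,a)=\mathcal L(A,a)$ if we allow $A$ itself to index an inclusive initial segment, and similarly $\mathcal L(r,b)=\mathcal L(B,b)$. By Proposition~\ref{cross1}, since these are cross-intersecting, $A$ and $B$ intersect strongly; let $j$ be the witness, so $A\cap B\cap[j]=\{j\}$ and $A\cup B\supset[j]$. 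Now set $P=(A\cap[j])$ and $Q=(B\cap[j])$; then $P\cap Q=\{j\}$, $P\cup Q=[j]$, $|P|\le a$, $|Q|\le b$, and $P,Q$ intersect strongly in their last element. It remains to check $\mathcal L(P,a)=\mathcal L(A,a)$ and $\mathcal L(Q,b)=\mathcal L(B,b)$, i.e.\ truncating $A$ to its intersection-relevant prefix $P$ does not change the initial segment. One inclusion is immediate since $P\supseteq$... actually $P\subseteq A$ gives $A<P$ hence $\mathcal L(A,a)\subseteq\mathcal L(P,a)$; for the reverse I would invoke maximality of the pair: if $\mathcal L(P,a)\supsetneq\mathcal L(A,a)$, then by the first direction $\mathcal L(P,a),\mathcal L(Q,b)$ is itself a cross-intersecting pair strictly containing $\mathcal L(A,a)\subseteq\mathcal L(m,a)$ and containing $\mathcal L(B,b)=\mathcal L(r,b)$ — wait, I need $\mathcal L(Q,b)\supseteq\mathcal L(r,b)$ too, which holds by the same truncation argument $Q\subseteq B$ — so maximality forces $\mathcal L(P,a)=\mathcal L(m,a)$ and $\mathcal L(Q,b)=\mathcal L(r,b)$, as desired. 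The delicate point here is the bookkeeping between ``$\mathcal L(m,a)$ as a count-indexed initial segment'' and ``$\mathcal L(S,a)$ as a set-indexed strict-below family,'' together with the edge cases $m\in\{0,{n\choose a}\}$ and $|P|=a$; once those are pinned down the argument closes cleanly by combining Proposition~\ref{cross1}, the first direction, and maximality.
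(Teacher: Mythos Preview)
Your proposal is correct and follows essentially the same route as the paper: both directions hinge on Proposition~\ref{cross1}, the truncation $P=A\cap[j]$, $Q=B\cap[j]$, and then maximality to force $\mathcal L(P,a)=\mathcal L(m,a)$, $\mathcal L(Q,b)=\mathcal L(r,b)$. The only differences are cosmetic: in the forward direction the paper checks just the single \emph{next} lex set after $\mathcal L(P,a)$ (which is explicitly $(P\setminus\{j\})\cup[j+1,j+a-|P|+1]$, visibly disjoint from $Q$) rather than an arbitrary $A'$ as you do, and in the inverse direction the paper re-derives $P\cup Q=[j]$ by a direct argument, whereas you obtain it immediately from the strong-intersection witness supplied by Proposition~\ref{cross1}.
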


 \begin{proof} If $P$ and $Q$ satisfy the condition then $\mathcal L(P,a)$ and $\mathcal L(Q,b)$ form a  pair of cross-intersecting families by Proposition \ref{cross1}. We have to show that it is a maximal pair. Let $A$ be the $a$-set, following $\mathcal L(P,a)$ in the lexicographical order. Then $A$ is the set obtained from $P$ by omitting the element $\{j\}$ and adjoining the interval $[j+1,p]$, where $p=a-|P|+1$. Note that $A\cap Q =\emptyset$ , therefore there are members of $\mathcal L(Q,b)$ containing $Q$, which are disjoint with $A$. The same argument applies to the set $B$, following the initial segment $\mathcal L(Q,b)$. Thus, the maximality of the pair $\mathcal L(P,a)$ and $\mathcal L(Q,b)$ is proved.\\

 Next, let $A$, $B$ be the last member of $\mathcal L(m,a)$ and $\mathcal L(r,b)$, respectively. Let $j$ be the smallest element of $A\cap B$. Note that the cross-intersecting property of the two families implies that $j$ is well-defined. Define $P=A \cap [j], Q= B \cap [j].$

 First we prove that $P \cup Q =[j]$. Suppose the contrary and let $i<j$ be an element that is not contained in $P \cup Q$. Then $P'=(P-\{j\})\cup {i}$ precedes $P$ in the lexicographical order and hence also $P'<A$. Consequently, all $a$-element supersets of $P'$ precede $A$ as well. Thus they are all members of $\mathcal L(m,a)$. However, $P'\cap B = \emptyset$  and, since $a+b \le n$ we have that some superset $P'$ is disjoint to $B$, a contradiction.

 The proof is almost complete. We have just proved that $P$ and $Q$ are strongly intersecting. By Proposition \ref{cross1} the families $\mathcal L(P,a)$ and $\mathcal L(Q,b)$ form a  pair of cross-intersecting families. These families contain $\mathcal L(m,a)$ and $\mathcal L(r,b)$, respectively. By the maximality of the pair $\mathcal L(m,a)$ and $\mathcal L(r,b)$, $\mathcal L(m,a)=\mathcal L(P,a),\ \mathcal L(r,b)=\mathcal L(Q,b)$ must hold.  \end{proof}

In what follows we will use the following simple statement:
\begin{prop}\label{prop11} Let $\mathcal A\subset{[n]\choose a}$ and $\mathcal B\subset {[n]\choose b}$ be cross-intersecting. Then we have $|\mathcal A|+c|\mathcal B|\le \max\left\{{n\choose a},c{n\choose b}\right\}.$
\end{prop}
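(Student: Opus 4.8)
The plan is to reduce to the case of initial segments in the lexicographic order and then compare the resulting quantities directly. First I would invoke Theorem \ref{thmHil}: since $\mathcal A$ and $\mathcal B$ are cross-intersecting, so are $\mathcal L(|\mathcal A|,a)$ and $\mathcal L(|\mathcal B|,b)$; replacing $\mathcal A,\mathcal B$ by these initial segments changes neither $|\mathcal A|$ nor $|\mathcal B|$, so it suffices to prove the inequality when both families are initial segments. Next I would enlarge the pair to a maximal cross-intersecting pair (adding sets only increases $|\mathcal A|+c|\mathcal B|$), so by Proposition \ref{cross2} we may assume $\mathcal A=\mathcal L(P,a)$, $\mathcal B=\mathcal L(Q,b)$ where $P,Q$ are non-empty subsets of $[n]$ with $P\cap Q=\{j\}$, $P\cup Q=[j]$, $|P|\le a$, $|Q|\le b$.

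The heart of the argument is then to show that, under this maximality, $|\mathcal A|+c|\mathcal B|$ is largest at one of the two extremes $j=1$. When $j=1$ we have $P=Q=\{1\}$, so $\mathcal A=\{A:1\in A\}$ has size $\binom{n-1}{a-1}$ and $\mathcal B$ likewise has size $\binom{n-1}{b-1}$; hmm — but the claimed bound is $\max\{\binom na, c\binom nb\}$, so the intended extremes are rather the degenerate cases where one of the two families is all of $\binom{[n]}{a}$ (resp. $\binom{[n]}{b}$) and the other is empty. Concretely: if $\mathcal B=\emptyset$ then $|\mathcal A|+c|\mathcal B|=|\mathcal A|\le\binom na$, and symmetrically if $\mathcal A=\emptyset$ then the sum is $\le c\binom nb$. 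So the real content is to prove that a \emph{non-degenerate} maximal pair (both families non-empty) never beats these. For this I would use the explicit description of $\mathcal L(P,a)$ and $\mathcal L(Q,b)$ via the strongly-intersecting structure to write $|\mathcal L(P,a)|$ and $|\mathcal L(Q,b)|$ as sums of binomial coefficients indexed by the "switch positions" in $P$ and $Q$, and observe the trade-off: making $\mathcal A$ larger forces $\mathcal B$ strictly smaller, so $|\mathcal A|+c|\mathcal B|$ is a (discrete) convex-type function of the parameter $j$ that attains its maximum at an endpoint of its range.

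I expect the main obstacle to be the last step — turning the "trade-off" intuition into a rigorous monotonicity/convexity argument. The cleanest route is probably a shifting/uncrossing move: given a non-degenerate maximal pair with last element $j\ge 1$ of $P\cap Q$, compare it with the pair obtained by pushing all of $[j]$ into $P$ (emptying $Q$ below $j$), and show via the Kruskal–Katona / Proposition \ref{cross2} bookkeeping that the sum does not decrease; iterating drives us to a degenerate pair. Alternatively one can argue by induction on $n$ or on $a+b$, peeling off the element $1$: split each family according to whether it contains $1$, note the "links" and "stars" are again cross-intersecting on $[n]\setminus\{1\}$ with shifted parameters, and apply the inductive hypothesis together with Pascal's identity $\binom na=\binom{n-1}{a-1}+\binom{n-1}{a}$. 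Either way the combinatorial estimates are routine once the reduction to initial segments and to a maximal pair is in place, so I would present those two reductions carefully and then dispatch the extremal comparison.
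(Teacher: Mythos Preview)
Your approach is over-engineered and, as you yourself note, incomplete at the crucial step. The paper's proof is a two-line double-counting argument that does not touch Kruskal--Katona or the structure of maximal pairs at all. Form the bipartite graph on parts $X={[n]\choose a}$ and $Y={[n]\choose b}$, joining $A\in X$ to $B\in Y$ iff $A\cap B=\emptyset$. This graph is biregular (degree ${n-a\choose b}$ on the $X$-side, ${n-b\choose a}$ on the $Y$-side), and a cross-intersecting pair $(\mathcal A,\mathcal B)$ is precisely an independent set. Counting edges from $\mathcal B$ into $X\setminus\mathcal A$ gives ${n-b\choose a}\,|\mathcal B|\le {n-a\choose b}\bigl({n\choose a}-|\mathcal A|\bigr)$, which, using ${n-a\choose b}{n\choose a}={n-b\choose a}{n\choose b}$, rearranges to
\[
\frac{|\mathcal A|}{{n\choose a}}+\frac{|\mathcal B|}{{n\choose b}}\le 1.
\]
Writing $|\mathcal A|+c|\mathcal B|={n\choose a}\cdot\frac{|\mathcal A|}{{n\choose a}}+c{n\choose b}\cdot\frac{|\mathcal B|}{{n\choose b}}$ and noting that the two fractions are nonnegative with sum at most $1$ gives the bound $\max\bigl\{{n\choose a},\,c{n\choose b}\bigr\}$ immediately.

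Your route via Theorem~\ref{thmHil} and Proposition~\ref{cross2} could in principle be pushed through, but the ``uncrossing'' or induction you defer is the entire content, and neither sketch is as routine as you suggest. In the induction, for instance, the links $\{A\setminus\{1\}:1\in A\in\mathcal A\}$ and $\{B\setminus\{1\}:1\in B\in\mathcal B\}$ are \emph{not} cross-intersecting with each other, so peeling off the element $1$ does not recurse cleanly; and the ``push all of $[j]$ into $P$'' move is exactly the kind of step that Lemmas~\ref{lem12}--\ref{lem13} formalize --- but those lemmas \emph{use} Proposition~\ref{prop11} in their proofs, so invoking that mechanism here would be circular. The biregular counting avoids all of this and keeps the proposition self-contained, which is the point: it is meant to be the elementary input feeding the later structural analysis, not a consequence of it.
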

\begin{proof} Consider the bipartite graph with parts ${[n]\choose a}$ and  ${[n]\choose b}$, in which two sets connected by an edge iff they are disjoint. Assign weight 1 to each vertex in ${n\choose a}$ and weight $c$ to any vertex in ${n\choose b}$. Then   the proposition essentially states that the independent set of the biggest weight in this graph coincides with one of its two parts. It is an easy consequence of the fact that the graph is regular in each of its parts. Denote the fraction of vertices from the independent set in the part ${[n]\choose a}$ and ${[n]\choose b}$ by $x$ and $y$, respectively. To see that, take an edge in the graph at random. On the one hand, at most one vertex of the independent set may be in the edge. On the other hand, the expected intersection of the edge with the independent set is $x+y$ (due to regularity on both sides). Therefore, $x+y\le 1$ and, clearly, ${n\choose a}x+c{n\choose b}y\le \max\{{n\choose a}, c{n\choose b}\}$.    \end{proof}

We are interested in bounding $|\mathcal L(m,a)|+c|\mathcal L(r,b)|$ from above, and thus we may w.l.o.g. restrict our attention only to maximal intersecting pairs of families, which, by Proposition \ref{cross2}, are equal to $\mathcal L(P,a), \mathcal L(Q,b)$ for some $P,Q\subset [n]$ with $P\cup Q=[i],\ P\cap Q=\{i\}$ for some $i\in [n]$. We will call such $P, Q$ {\it defining sets} for $\mathcal L(P,a),\ \mathcal L(Q,b)$, respectively (or that $\mathcal L(P,a)$ is {\it defined by $P$}). All the families will be lexicographical.


These easy-to-prove lemmas actually give a very useful consequence concerning the sizes of maximal cross-intersecting pairs of families $\mathcal L(m,a),\mathcal L(r,b)$.

\begin{lem}\label{lem12} Consider two maximal cross-intersecting families $\mathcal A = \mathcal L(P,a),\ \mathcal B = \mathcal L(Q,b)$, such that $P\cap Q=\{i\}$, $P\cup Q=[i]$. Take any $j<i$ such that $j\in Q$, $j+1\notin Q$ (and thus $j\notin P,\ j+1\in P$).


Consider the following two pairs of cross-intersecting lexicographical families. The first pair is $\mathcal A',\mathcal B'$, defined by $P'=(P\cap[j])\cup \{j\}$,  $Q' = Q\cap [j]$. The second pair is $\mathcal A'',\mathcal B''$, defined by $P''=P\cap[j+1]$,  $Q'' = (Q\cap [j+1])\cup \{j+1\}$.


Then for any $c>0$ we have
\begin{equation}\label{eq11} |\mathcal A|+c|\mathcal B|\le \max \{|\mathcal A'|+c|\mathcal B'|, |\mathcal A''|+c|\mathcal B''|\}.\end{equation}
\end{lem}
For integers $i<j$ we denote $[i,j]=\{i,i+1,\ldots,j\}$. Before going into the proof of the lemma, we introduce the following notation. For $i\in [n]$, a set $S\subset [n]$ and a family $\mathcal F\subset {[n]\choose k}$ we define $$\mathcal F_i(S) = \{F'\in {[i+1,n]\choose k-|S\cap[i]|}: F'\cup (S\cap[i])\in \mathcal F\}.$$
Let us also denote  $\mathcal Z={[n]\choose k}.$


\begin{proof} 
 We remark that, by definition, we have $\mathcal B''\subset \mathcal B\subset \mathcal B'$ and $\mathcal A''\supset \mathcal A\supset \mathcal A'.$
Moreover, we have $\mathcal B' - \mathcal B'' = \mathcal Z_{j+1}(Q)$. Similarly, we have $\mathcal A'' - \mathcal A'=\mathcal Z_{j+1}(P)$.
We also have $\mathcal A-\mathcal A'=\mathcal A_{j+1}(P)$ and $\mathcal B-\mathcal B''=\mathcal B_{j+1}(Q)$.

Since $P\cap Q\cap[j+1]=\emptyset$, we know that $\mathcal A_{j+1}(P)$ and $\mathcal B_{j+1}(Q)$ are cross-intersecting. Therefore, by Proposition \ref{prop11} we have
\begin{equation}\label{eq12}|\mathcal A_{j+1}(P)| +c|\mathcal B_{j+1}(Q)|\le \max\{|\mathcal Z_{j+1}(P)|, c|\mathcal Z_{j+1}(Q)| \} = \max\{|\mathcal A'' - \mathcal A'|, c|\mathcal B' - \mathcal B''| \}.\end{equation}
\end{proof}



Using almost identical proof, one may get the following  twin of Lemma \ref{lem12}:

\begin{lem}\label{lem13}Consider two maximal cross-intersecting families $\mathcal A = \mathcal L(P,a),\ \mathcal B = \mathcal L(Q,b)$, such that $P\cap Q=\{i\}$, $P\cup Q=[i]$. Moreover, assume that for some $j< i$ $[j]\subset Q$.

Consider the pair of cross-intersecting lexicographical families $\mathcal A',\mathcal B'$, defined by $P'=\{j\}$,  $Q' = [j]$.




Then for any $c>o$ we have
\begin{equation}\label{eq13} |\mathcal A|+c|\mathcal B|\le \max \left \{|\mathcal A'|+c|\mathcal B'|, {n\choose a}\right\}.\end{equation}
\end{lem}
\begin{proof} Note that $\mathcal B\subset \mathcal Z_j(Q)=\mathcal B'$. Similarly, $\mathcal A_j(P)=\mathcal A-\mathcal A'\subset \mathcal Z_j(P)$. The families $\mathcal A_j(P)$ and $\mathcal B_j(Q)=\mathcal B$ are cross-intersecting. Therefore,
$$|\mathcal A_j(P)|+c|\mathcal B|\le \max\{|\mathcal Z_j(P)|, c|\mathcal Z_j(Q)|\}.$$
Therefore,
$$|\mathcal A|+c|\mathcal B|\le \max \big\{|\mathcal A'|+|\mathcal Z_j(P)|,|\mathcal A'|+c|\mathcal Z_j(Q)|\big\}\le \max \left \{{n\choose a}, |\mathcal A'|+c|\mathcal B'| \right\}.$$
\end{proof}


\subsection{A sharpening of Theorem \ref{thmcross}}\label{sec46}
In the proof of Theorem \ref{thmrec2} we need a sharpened version of Theorem \ref{thmcross}, which proof relies on the material from the previous subsection.
For integer $j\ge 2$ and $c>0$ let us denote
\begin{equation}\label{eqf} f_{n,k}(j,c)= (c+1){n-1\choose k-1}-c{n-j\choose k-1} +{n-j\choose k-j+1}.\end{equation}
Note that $f_{n,k}(j,c) = |\mathcal L(P,k)|+c|\mathcal L(Q,k)|$, where $P=[2,j]$ and $Q=\{1,j\}$.

\begin{thm}\label{thmcross2} Let $k, n\in \N$,where $ k\ge 2$ and $n\ge 2k$.
Consider two cross-intersecting families $\mathcal A,\ \mathcal B$, where  $\mathcal B\subset \mathcal A\subset {[n]\choose k}$.  Let $2\le i\le k+1$ and $|\mathcal B|\le {n-1\choose k-1}-{n-i\choose k-1}$. Then for any $c\ge 1$
\begin{equation}\label{eqcross2} |\mathcal A|+c|\mathcal B|\le \max\Bigl\{f_{n,k}(2,c),f_{n,k}(i,c), {n\choose k}\Bigr\}.
\end{equation}
\end{thm}
\begin{proof} If $\mathcal B$ is empty, then the statement is obvious, therefore, we assume the opposite for the rest of the proof. By Theorem \ref{thmHil}, $\mathcal L_0 = \mathcal L(|\mathcal A|, k)$ and $\mathcal L_1 = \mathcal L(|\mathcal B|, k)$ are cross-intersecting. W.l.o.g. we may assume that this is a maximal cross-intersecting pair, and, thus, by Proposition \ref{cross2}, $\mathcal L_0 = \mathcal L(P,k)$ and $\mathcal L_1 = \mathcal L(Q,k)$, where $P\cap Q = \{l\}$ and $P\cup Q = [l]$  for some $l\ge 2$. Moreover, due to the restrictions on $|\mathcal B|$, we know that $1\in P$, and $|P\cap [i]|\ge 2$. Let $i_0$ be the smallest integer, for which $|P\cap [i_0]|\ge 2$. Then, obviously, $|P\cap [i_0]|=2$ and $i_0\le i$.



\textbf{(i)\ } Assume that $i_0=2$. Then, applying Lemma~\ref{lem13} to $\mathcal A = \mathcal L_0, \ \mathcal B=\mathcal L_1$ with $j=2$ and $a=b=k$, we get that $|\mathcal L_0|+c|\mathcal L_1|\le \max\big\{f_{n,k}(2,c), {n\choose k}\big\}$.

\textbf{(ii)\ } Assume that $i_0>2$. If $Q = \{1,i_0\}$, then $|\mathcal L_0|+c|\mathcal L_1| = f_{n,k}(i_0,c)$. Assume further that $\{1,i_0\}\subsetneq Q$.
Then, in particular, $i_0-1\in P$ and $i_0\notin P$.
Therefore, we may apply Lemma~\ref{lem12} with $\mathcal A = \mathcal L_1,\ \mathcal B = \mathcal L_0$, $j=i_0-1$, $a=b=k$ and get that
$$c(|\mathcal L_1|+\frac 1c|\mathcal L_0|)\le c\max\Big\{|\mathcal L(Q',k)|+\frac 1c|\mathcal L(P',k)|, |\mathcal L(Q'',k)|+\frac 1c|\mathcal L(P'',k)\Big\},$$
where $P'=[2,i_0-1],\ Q'=\{1,i_0-1\}$ and $P''=[2,i_0],\ Q''=\{1,i_0\}$. Thus, the right hand side of the displayed inequality above is precisely
$$\max\{f_{n,k}(i_0-1,c), f_{n,k}(i_0,c)\}.$$
To complete the proof, we need to show that for $j\ge 3$ we have $f_k(j,c)\le f_k(j+1,c)$.
Indeed,
\begin{multline}\label{eq632} f_{n,k}(j+1,c)-f_{n,k}(j,c) = c\Bigl({n-j\choose k-1}-{n-j-1\choose k-1}\Bigr) \\- \Bigl({n-j\choose k-j+1}-{n-j-1\choose k-j}\Bigr)= c{n-j-1\choose k-2}- {n-j-1\choose k-j+1}\ge 0,\end{multline}
where the last inequality holds due to the fact that $c\ge 1$ and
$$\frac{{n-j-1\choose k-2}}{ {n-j-1\choose k-j+1}} = \prod_{l = 2}^{j-2}\frac{n-k-l}{k-l}\ge 1,$$
since $n\ge 2k$. This completes the proof of the theorem.
\end{proof}

For our purposes it would be more convenient to apply the following slightly modified version of the theorem:
\begin{cor}\label{thmcross3} In the conditions of Theorem \ref{thmcross2} let $2\le i\le k$ and $|\mathcal B| \le  {n-1\choose k-1}-{n-i\choose k-1}+x$ for some natural $x$. Then
\begin{equation}\label{eqcross3} |\mathcal A|+c|\mathcal B|\le \max\Bigl\{f_{n,k}(i,c)+cx, f_{n,k}(2,c), {n\choose k}\Bigr\}.
\end{equation}
\end{cor}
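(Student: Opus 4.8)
The plan is to derive Corollary \ref{thmcross3} directly from Theorem \ref{thmcross2} by a straightforward reduction that trades the extra $x$ sets in $\mathcal B$ against a controlled additive loss. First I would set $\mathcal B_0 \subset \mathcal B$ to be a subfamily of size exactly $|\mathcal B| - x$ (if $|\mathcal B| \le x$, or more precisely if $|\mathcal B| \le {n-1\choose k-1}-{n-i\choose k-1}$ already, we are in the hypotheses of Theorem \ref{thmcross2} directly and there is nothing to do; so assume $|\mathcal B| = {n-1\choose k-1}-{n-i\choose k-1}+x'$ with $0 < x' \le x$, and remove $x'$ sets). Then $\mathcal B_0$ satisfies $|\mathcal B_0| = {n-1\choose k-1}-{n-i\choose k-1}$ (or less), and the pair $\mathcal A, \mathcal B_0$ still satisfies the cross-intersecting hypothesis and $\mathcal B_0 \subset \mathcal A$, so Theorem \ref{thmcross2} applies to give the bound on $|\mathcal A| + c|\mathcal B_0|$.

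Next I would account for the removed sets: $|\mathcal A| + c|\mathcal B| = \bigl(|\mathcal A| + c|\mathcal B_0|\bigr) + c\,x' \le \bigl(|\mathcal A| + c|\mathcal B_0|\bigr) + c\,x$. Substituting the three-way maximum from \eqref{eqcross2}, the first term of that maximum becomes $(c+1){n-1\choose k-1}-c{n-i\choose k-1} +{n-i\choose k-i+1} + cx$, and the second and third terms pick up an additive $cx$ as well. To land exactly on the stated \eqref{eqcross3}, I would then weaken $cx$ to $kx$ in the first slot (legitimate since $c \ge 1$ is not assumed to be at most $k$ — wait, here is the subtlety) — actually the cleanest route is the reverse: one should not simply bound $c x' \le k x$ blindly. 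Let me reconsider: the intended application has $c$ playing the role it plays in Theorem \ref{thmbig}'s averaging, namely $c = k$, so in the regime of interest $cx = kx$; but to state the corollary uniformly for all $c \ge 1$ one absorbs the loss as $kx$ only in the first branch (the one that would otherwise be the active maximum in the Erd\H os--Ko--Rado regime) and one must check that the second and third branches of \eqref{eqcross2} are already $\ge$ the corresponding branches of \eqref{eqcross3} plus the needed slack, or alternatively that $cx' \le kx$ holds because in fact only $c \le k$ is ever used. The honest statement is: since $|\mathcal B|$ exceeds the Theorem \ref{thmcross2} threshold by at most $x$, and each extra set of $\mathcal B$ contributes $c$ to the objective, replacing the bound of Theorem \ref{thmcross2} by itself plus $cx$ is valid, and then $cx \le \max\{c,k\}x$; in the paper's usage $c=k$ so $cx = kx$, which is what appears.

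So the key steps, in order, are: (1) reduce to $|\mathcal B| = {n-1\choose k-1}-{n-i\choose k-1}+x'$ with $1 \le x' \le x$, the case $x'=0$ being Theorem \ref{thmcross2} verbatim; (2) delete $x'$ arbitrary sets from $\mathcal B$ to obtain $\mathcal B_0$ with $|\mathcal B_0| = {n-1\choose k-1}-{n-i\choose k-1}$, noting that the cross-intersecting hypothesis and $\mathcal B_0 \subset \mathcal A$ are inherited; (3) apply Theorem \ref{thmcross2} to $(\mathcal A, \mathcal B_0)$; (4) add back $c x'$ and bound $c x' \le k x$ in the regime $c = k$ (which is the only one the corollary is invoked in), or more generally carry $cx$ through.

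The main obstacle — really the only point needing care — is step (4): verifying that the additive loss incurred by reinserting the deleted sets is correctly placed inside the three-way maximum, i.e.\ that adding $kx$ to the first branch alone (rather than to all three, or to the max as a whole) still yields a valid upper bound. This holds because the first branch of \eqref{eqcross2} is, in the relevant parameter range, already the dominant one precisely when $\mathcal B$ is near-extremal (as the discussion in Section~\ref{sec6} explains), so bounding $|\mathcal A|+c|\mathcal B_0|$ by that branch and then adding $cx$ suffices; when instead one of the other two branches is the true maximum for $(\mathcal A,\mathcal B_0)$, those branches are comfortably larger and absorb the loss without modification. I would make this precise by simply writing $|\mathcal A|+c|\mathcal B| \le \bigl(\text{RHS of }\eqref{eqcross2}\bigr) + kx$ and observing term-by-term that this is at most the RHS of \eqref{eqcross3}, which is immediate once $c = k$ (or $c \le k$) is noted.
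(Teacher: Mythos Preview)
Your approach is essentially the paper's own: delete the excess $y$ sets from $\mathcal B$, apply the argument of Theorem~\ref{thmcross2} to the trimmed pair $(\mathcal A,\mathcal B_0)$, and then add back $cy\le cx$. The paper's proof is a two-sentence sketch doing exactly this.

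The one place where you hedge---why the additive $cx$ is attached only to the first branch of the maximum---is handled in the paper not by appealing to the theorem as a black box, but by re-running its \emph{proof} on $(\mathcal A,\mathcal B_0)$. The point is that once $|\mathcal B_0|={n-1\choose k-1}-{n-i\choose k-1}$, the lex family $\mathcal L(|\mathcal B_0|,k)$ has truncated characteristic vector $(1,0,\dots,0,1)$ of length $i$, so the proof of Theorem~\ref{thmcross2} lands in case~(iii) with $j=i$; the second branch arises only in cases~(i)--(ii), i.e.\ only when $|\mathcal B|$ is small. That is the precise content of the paper's remark ``the other two appear in the proof only when $|\mathcal B|$ is relatively small,'' and it replaces your looser claim that the other branches are ``comfortably larger and absorb the loss.'' If you phrase step~(4) this way, your argument is complete and matches the paper.

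On the $kx$ versus $cx$ discrepancy you flag: you are right that the argument actually yields $+cx$, and the paper's own proof says $cx$, not $kx$. The $kx$ in the statement is harmless only because the corollary is invoked solely with $c=k$; your instinct that the stated bound is not literally what is proved for general $c$ is correct.
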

\begin{proof} The proof is basically the same, the only thing one has to notice that, if $|\mathcal B|$ gets bigger, then $|\mathcal A|$ can only get smaller. Therefore, if $|\mathcal B| =  {n-1\choose k-1}-{n-i-1\choose k-1}+y$, $1\le y\le x$, then one just remove
$y$ elements from $|\mathcal B|$, applies the same proof, and puts the elements back, adding $cy$ to the right hand side. Note that we only add $cx$ to the first expression, since the other two appear in the proof only when $|\mathcal B|$ is relatively small.
\end{proof}


\subsection{Intersecting families with conditions on maximum degree}\label{sec47}

For a given family $\mathcal F$ let $d(\mathcal F) = \max_{l\in [n]}|\{F\in \mathcal F: l\in F\}|$ be the maximal degree of an element of $[n]$ in $\mathcal F$. The diversity $\gamma(\mathcal F)$ of $\mathcal F$ is defined as $|\mathcal F|-d(\mathcal F)$. The other important ingredient of the proof of Theorem \ref{thmsmall} is the following theorem due to Frankl \cite{Fra1} in the version due to Kupavskii and Zakharov \cite{KZ}. (See also \cite{Kup17} for a yet stronger version of the theorem)
\begin{thm}\label{frdeg} Suppose that $n>2k$, $3\le i\le k+1$, $\mathcal F\subset{[n]\choose k}$, $\mathcal F$ is intersecting and
$$\gamma (\mathcal F)\ge  {n-i\choose k-i+1}.$$
Then
\begin{equation*}\label{eqsize}|\mathcal F|\le {n-1\choose k-1}-{n-i\choose k-1} +{n-i\choose k-i+1} =: g(i).\end{equation*}
In particular, if $\gamma(\mathcal F)>0$, then $|\mathcal F|\le g(k+1)$ holds.
\end{thm}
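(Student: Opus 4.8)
The plan is to split $\mathcal F$ at a vertex of maximum degree into its link and its deletion, bound the deletion through the cross-intersecting structure via Kruskal-Katona, and then optimise over the size of the link; one must work with $\mathcal F$ itself rather than its shift-closure, since shifting may increase $d(\mathcal F)$ and so destroy the hypothesis.

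Choose $x$ with $d_x(\mathcal F)=d(\mathcal F)=:d$ and relabel $x=1$. Put $\mathcal F(1)=\{F\setminus\{1\}:1\in F\in\mathcal F\}\subseteq\binom{[2,n]}{k-1}$ and $\mathcal F(\bar 1)=\{F\in\mathcal F:1\notin F\}\subseteq\binom{[2,n]}{k}$, so that $|\mathcal F|=|\mathcal F(1)|+|\mathcal F(\bar 1)|$ and $|\mathcal F(1)|=d\le h(i)$. On the ground set $[2,n]$ (of size $n-1\ge 2k$, as $n>2k$) the families $\mathcal F(1)$ and $\mathcal F(\bar 1)$ are cross-intersecting, $\mathcal F(\bar 1)$ is intersecting, and $d_l(\mathcal F(1))+d_l(\mathcal F(\bar 1))=d_l(\mathcal F)\le h(i)$ for every $l\in[2,n]$. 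The goal is $d+|\mathcal F(\bar 1)|\le g(i)=h(i)+\binom{n-i}{k-i+1}$.

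For the deletion, use Theorem~\ref{thmHil} to replace $\mathcal F(1),\mathcal F(\bar 1)$ by the lexicographic initial segments of the same sizes on $[2,n]$ (still cross-intersecting, and $\mathcal F(\bar 1)$ still intersecting since $|\mathcal F(\bar 1)|\le\binom{n-2}{k-1}$ by Theorem~\ref{thmekr}); by Proposition~\ref{cross2} these form an explicit maximal pair $\mathcal L(Q,k-1),\mathcal L(P,k)$, and $Q$ -- hence $P$, hence the maximal value $\Psi(d):=|\mathcal L(P,k)|$ -- is determined by $d$. Concretely, passing to complements in $[2,n]$, the cross-intersecting condition is exactly the statement that $\mathcal F(1)$ avoids the $(n-2k)$-th shadow of $\overline{\mathcal F(\bar 1)}:=\{[2,n]\setminus A:A\in\mathcal F(\bar 1)\}$, so $\Psi(d)$ is the largest size of a family of $(n-1-k)$-sets whose $(n-2k)$-th shadow fits in the ``budget'' $\binom{n-1}{k-1}-d$. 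When $d=h(i)$ the budget equals $\binom{n-i}{k-1}$, the optimal complement-family is $\binom{[n-i]}{n-1-k}$ (after relabelling), and $\Psi(h(i))=\binom{n-i}{k-i+1}$, so $d+\Psi(d)=g(i)$ there; this value is realised by $\mathcal F^{*}=\{F:1\in F,\ F\cap[2,i]\ne\emptyset\}\cup\{F:[2,i]\subseteq F\}$, the conjectured extremal family.

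What remains -- and what I expect to be the main obstacle -- is to show $d+|\mathcal F(\bar 1)|\le g(i)$ for every admissible configuration, not just at $d=h(i)$. The cross-intersecting bound $d+\Psi(d)$ alone is in general too weak (for $d$ near $\binom{n-2}{k-2}$ it only yields $\binom{n-1}{k-1}$), so the argument must also exploit that $1$ has maximum degree and that \emph{every} vertex has degree $\le h(i)$. If $\mathcal F$ is a star one is done immediately, since then $|\mathcal F|=d(\mathcal F)\le h(i)$; otherwise one runs a covering-number / Erd\H os-Ko-Rado-stability analysis -- if $\mathcal F(\bar 1)$ is close to a star at some $l\ne1$ then $d_l(\mathcal F)$ is nearly $|\mathcal F|$ and the degree bound finishes it, while if $\mathcal F(\bar 1)$ is far from every star a Hilton-Milner-type bound (or the theorem applied inductively to $\mathcal F(\bar 1)$ on $n-1$ vertices, where $d(\mathcal F(\bar 1))\le d$ forces a small parameter) makes $|\mathcal F(\bar 1)|$ small. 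Gluing these cases together, and carrying out the attendant binomial-coefficient estimates -- in which $n>2k$ and $3\le i\le k+1$ are both essential (note for instance that $g$ is not monotone in $i$, dipping between $i=2$ and $i=3$) -- is the technical heart of the proof.
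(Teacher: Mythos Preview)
The paper does not prove Theorem~\ref{frdeg} at all: it is quoted as a known result of Frankl~\cite{Fra1} and used as a black box (the only thing the paper actually proves in that subsection is Corollary~\ref{cordeg}). So there is no ``paper's own proof'' to compare against.

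As for your proposal itself, it is an outline rather than a proof, and you say so explicitly. The decomposition $|\mathcal F|=|\mathcal F(1)|+|\mathcal F(\bar 1)|$ and the cross-intersecting/Kruskal--Katona bound on $|\mathcal F(\bar 1)|$ in terms of $d=|\mathcal F(1)|$ are correct and do give $d+\Psi(d)\le g(i)$ at the endpoint $d=h(i)$. But, as you yourself note, this cross-intersecting bound alone is \emph{not} enough for smaller $d$: when $d$ is near $\binom{n-2}{k-2}$ one only gets $d+\Psi(d)\approx\binom{n-1}{k-1}$, which can exceed $g(i)$. The real content of the theorem is precisely the step you defer --- using the hypothesis that \emph{every} vertex, not just $1$, has degree at most $h(i)$ to close this gap. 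Your sketched dichotomy (``$\mathcal F(\bar 1)$ close to a star at some $l$'' versus ``far from every star, apply Hilton--Milner/induction'') is plausible in spirit, but neither branch is carried out, and the ``induction on $n$'' suggestion is delicate since restricting to $\mathcal F(\bar 1)$ drops $k$ by nothing while dropping $n$ by one, and the degree bound $d(\mathcal F(\bar 1))\le h(i)$ does not translate into the same hypothesis at the smaller parameters. So as written the proposal has a genuine gap: the optimisation over $d$ using the full degree hypothesis is asserted but not done.
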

The theorem is sharp for each $i$, as suggested by the following example:
$$\Big\{F\in{[n]\choose k}: 1\in F, [2,i]\cap F\ne\emptyset\Big\}\cup\Big\{F\in {[n]\choose k}: [2,i]\subset F\Big\}.$$

It is not difficult to see that $g(3) =g(4)$ and $g(i)<g(j)$ if  $4\le i<j$. Let us verify the first claim:\begin{small}
$$g(3)={n-1\choose k-1}-{n-3\choose k-1}+{n-3\choose k-2} = {n-1\choose k-1}-{n-4\choose k-1}-{n-4\choose k-2}+{n-4\choose k-2}+{n-4\choose k-3}=g(4).$$
\end{small}

\section{Proof of Theorem \ref{thmsmall} in the case  $2k\le n\le 3k-1$}\label{sec4}
In this section we show that  $\mathcal E(n,k,1)$ has the maximum cardinality among intersecting families for $2k\le n\le 3k-1$.

 The proof is based on the application of the general Katona's circular method directly for $\mathcal V(n,k,1)$. Consider the following subfamily $\mathcal H$ of $\mathcal V(n,k,1)$:
$$\mathcal H = \{\mathbf v = (v_1,\ldots, v_n): \text{ for some } i\in[n]\ v_i = \ldots = v_{i+k-1} = 1,\ v_{i-k} = -1\}.$$
We remark that all indices are modulo $n$. That is, it is a usual Katona's circle for $k$-sets, but in which each $k$-set gets an extra $-1$-coordinate, which is at distance $k$ from the 1-part along the circle.

Take an intersecting family $\mathcal F\subset \mathcal V(n,k,1)$. We claim that $|\mathcal F\cap \mathcal H|\le k$. Denote by $\mathcal F'\subset{[n]\choose k}$ the family of sets of 1's from $\mathcal F$, and similarly for $\mathcal H'$. We claim that $\mathcal H'\cap \mathcal F'$ is an intersecting family. Assume that there are two sets $F'_1,F'_2\in \mathcal H'\cap \mathcal F'$, that are disjoint. Assume for simplicity  that $F'_2 = [k+1,2k]$. Then $F'_1$ is obliged to contain $\{1\}$, since any cyclic interval of length $k$ in $[n]\setminus [k+1,2k]$ contains $\{1\}$, provided that $n\le 3k-1$. Therefore, the corresponding vector $F_1\in \mathcal F\cap \mathcal H$ has 1 on the first coordinate position. At the same time, by definition of $\mathcal H$, the vector $F_2$ has -1 on the first coordinate position. Interchanging the roles of $F_1,F_2$, we get that both of them have -1 in front of 1 of the other vector. Moreover, their sets of 1's do not intersect. Therefore, they form a minimal scalar product. Therefore, $\mathcal H'\cap \mathcal F'$ is indeed intersecting, and $|\mathcal H\cap \mathcal F| = |\mathcal H'\cap \mathcal F'| \le k$.

The rest of the argument is an application of Lemma \ref{lem66} to the following graph. Let $G$ have the set of vertices $\mathcal V(n,k,1)$, with two vertices connected if the corresponding vectors have scalar product -2. Then $|\mathcal F|\le \alpha(G)$ and the considerations from the previous paragraph give that $\alpha(G|_{\mathcal H}) = k$. By Lemma \ref{lem66} we have $$\alpha(G)\le \frac{\alpha(G|_{\mathcal H})}{|\mathcal H|}|\mathcal V(n,k,1)|= \frac kn|\mathcal V(n,k,1)| = e(n,k,1).$$
The last equality was obtained in Section \ref{sec31}.

\section{Proof of Theorem \ref{thmbig}}\label{sec5}
For any intersecting family $\mathcal G\subset\mathcal V(n+1,k,1)$ we introduce the following notations. By $\mathcal G(i),\mathcal G(-i),\mathcal G(\bar i)$ we denote the subfamilies of $\mathcal G$, that have $1,-1,0$ as an $i$-th coordinate, respectively. It is important to mention that we consider them as families of vectors on the set of coordinates with the $i$-th coordinate excluded.
The definition extends in an obvious way on the family $\mathcal G(I_1, \bar I_2,-I_3)$, where $I_1,I_2,I_3\subset[n]$ are non-intersecting sets of indices.


Consider a maximum intersecting family $\mathcal F\subset\mathcal V(n+1,k,1)$. Based on the conclusion of Section \ref{sec321}, we may and will assume that $\mathcal F$ is shifted. Denote $\mathcal A = \mathcal F(-(n+1))$ and $\mathcal B = \mathcal F(n+1)$.

\begin{prop}\label{prop3} We have $m(n+1,k,1)-m(n,k,1)\le |\mathcal A|+|\mathcal B|.$
\end{prop}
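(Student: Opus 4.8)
The plan is to split $\mathcal F$ according to the value of its $(n+1)$-st coordinate. By definition $\mathcal F = \mathcal A \sqcup \mathcal B \sqcup \mathcal F(\overline{n+1})$, where $\mathcal F(\overline{n+1})$ consists of the vectors of $\mathcal F$ having $0$ in position $n+1$, regarded — as in our convention for $\mathcal G(\bar i)$ — as vectors on the coordinate set $[n]$. Since $\mathcal F$ has maximum size, $|\mathcal F| = m(n+1,k,1)$, so it suffices to prove that $|\mathcal F(\overline{n+1})| \le m(n,k,1)$.

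To establish this, I would first observe that a vector of $\mathcal V(n+1,k,1)$ with a $0$ in the last coordinate becomes, after deleting that coordinate, a vector of $\mathcal V(n,k,1)$: it still has exactly $k$ coordinates equal to $+1$ and exactly one equal to $-1$. Next, for any two vectors $\mathbf v, \mathbf w \in \mathcal F(\overline{n+1})$ their scalar product is unaffected by deleting the $(n+1)$-st coordinate, since that coordinate equals $0$ in both. As $\mathcal F$ is intersecting, $\langle \mathbf v,\mathbf w\rangle > -2$, and hence the same inequality holds for their restrictions to $[n]$. Therefore $\mathcal F(\overline{n+1})$, viewed inside $\mathcal V(n,k,1)$, is an intersecting family, which gives $|\mathcal F(\overline{n+1})| \le m(n,k,1)$ by the definition of $m(\cdot,\cdot,\cdot)$.

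Combining the two points, $m(n+1,k,1) = |\mathcal F| = |\mathcal A| + |\mathcal B| + |\mathcal F(\overline{n+1})| \le |\mathcal A| + |\mathcal B| + m(n,k,1)$, which is exactly the asserted inequality. There is no genuine obstacle in this proposition: the only thing to verify is the elementary fact that deleting a coordinate which is $0$ in every vector of a subfamily preserves both membership in $\mathcal V(n,k,1)$ and the intersecting property. The substantive work lies ahead — namely showing $|\mathcal A| + |\mathcal B| \le {n\choose k}$ for $n\ge k^2$ via the $(i,j)$-shift structure and the averaging over coordinates $i<n+1$ using Theorem \ref{thmcross} — which, together with (\ref{eq04}), yields Theorem \ref{thmbig}.
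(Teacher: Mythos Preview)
Your proof is correct and follows exactly the same approach as the paper: decompose $\mathcal F$ according to the $(n+1)$-st coordinate, use that $\mathcal F(\overline{n+1})$ is an intersecting subfamily of $\mathcal V(n,k,1)$ to bound it by $m(n,k,1)$, and conclude. The only difference is that you spell out in slightly more detail why dropping a zero coordinate preserves both membership in $\mathcal V(n,k,1)$ and the intersecting property, which the paper leaves implicit.
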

\begin{proof} First, by definition we have $|\mathcal F| = m(n+1,k,1).$ Second, consider a family $\mathcal F(\overline{n+1})$.
It is a subfamily in $\mathcal V(n,k,1)$, moreover, it is intersecting. Therefore, $|\mathcal F(\overline{n+1})|\le m(n,k,1)$. Finally, $m(n+1,k,1) =|\mathcal F| = |\mathcal F(\overline{n+1})|+|\mathcal A|+|\mathcal B|$. \end{proof}

For a given $i\in [n]$ consider two families of sets $\mathcal B(-i),\mathcal A(i)$ (both of these families could be seen as sets, since the only $-1$-coordinate is fixed and excluded in both). We have $\mathcal B(-i),\mathcal A(i)\subset{[n]- \{i\}\choose k-1}.$ Moreover, $\mathcal B(-i)\subset\mathcal A(i)$ due to shifting, and we may apply Theorem \ref{thmcross} to these two families with $c = k$ and obtain
$$|\mathcal A(i)|+k|\mathcal B(-i)|\le \max\Bigl\{{n-1\choose k-1}, (k+1){n-2\choose k-2}\Bigr\}.$$
Summing this inequality over all $i\in [n]$, we get
\begin{equation*}\sum_{i=1}^n|\mathcal A(i)|+k|\mathcal B(-i)| = k(|\mathcal A|+|\mathcal B|)\le n \max\Bigl\{{n-1\choose k-1}, (k+1){n-2\choose k-2}\Bigr\}\end{equation*}
\begin{equation}\label{eq5} |\mathcal A|+|\mathcal B| \le \max\Bigl\{{n\choose k}, \frac{n(k+1)}k{n-2\choose k-2}\Bigr\}.\end{equation}
Maximum in the right hand side of (\ref{eq5}) is attained on the first expression if

$${n\choose k} = \frac {n(n-1)}{k(k-1)}{n-2\choose k-2}\ge \frac {n(k+1)}k{n-2\choose k-2},$$
which is equivalent to $n\ge k^2$.

\begin{proof}[Proof of Theorem \ref{thmbig}] The bound $m(n+1,k,1) \le m(n,k,1)+{n\choose k}$ was, in fact, already proven in this section. In the notations above, consider $\mathcal F, \mathcal A, \mathcal B$. On the one hand, by (\ref{eq5}) and the discussion after this inequality,  we have $|\mathcal A|+|\mathcal B|\le {n\choose k}$ for $n\ge k^2$. Applying Proposition \ref{prop3}, we get the bound.

The bound  $m(n+1,k,1) \ge m(n,k,1)+{n\choose k}$ was already obtained in Section \ref{sec31} (and mentioned in the introduction).
\end{proof}


\section{Proof of Theorem \ref{thmsmall} in the case $3k\le n\le k^2$}\label{sec6}

Looking at equation (\ref{eq5}) in the case $n<k^2$, we see that $m(n+1,k,1)-m(n,k,1)\le  \frac{n(k+1)}k{n-2\choose k-2}$. On the other hand, from Section \ref{sec31} we know, that $e(n+1,k)-e(n,k) = k{n-1\choose k-1} = \frac{(n-1)k}{k-1}{n-2\choose k-2}$. We have $\frac{n(k+1)}k-\frac{(n-1)k}{k-1} = \frac{k^2-n}{k(k-1)}$. Using Theorem \ref{thmcross2}, we are going to improve the inequality (\ref{eq5}), so that it matches the bound given by the construction $\mathcal E(n,k,1)$.
To complete the proof of Theorem \ref{thmsmall}, it is enough to prove the following theorem.
\begin{thm}\label{thmrec2}
Let $k\ge 3$, $3k-1\le n<k^2$. We have  $m(n+1,k,1)-m(n,k,1) = \frac{(n-1)k}{k-1}{n-2\choose k-2} = e(n+1,k)-e(n,k)$.
\end{thm}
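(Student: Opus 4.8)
The lower bound is immediate: $\mathcal E(n+1,k,1)$ is intersecting, so $m(n+1,k,1)\ge e(n+1,k,1)=e(n,k,1)+k{n-1\choose k-1}$, and since $m(n,k,1)=e(n,k,1)$ for the smaller admissible values of $n$ (Section \ref{sec4} together with induction on $n$), this gives $m(n+1,k,1)-m(n,k,1)\ge k{n-1\choose k-1}$. The real content is therefore the matching upper bound. Following Section \ref{sec5}, I would take a maximum shifted intersecting family $\mathcal F\subset\mathcal V(n+1,k,1)$, put $\mathcal A=\mathcal F(-(n+1))$ and $\mathcal B=\mathcal F(n+1)$; by Proposition \ref{prop3} it suffices to prove $|\mathcal A|+|\mathcal B|\le k{n-1\choose k-1}$. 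For every $i\in[n]$ the families $\mathcal B(-i)\subset\mathcal A(i)\subset{[n]\setminus\{i\}\choose k-1}$ are cross-intersecting (the inclusion holds by shiftedness) and $\mathcal B(-i)$ is itself intersecting, and $k(|\mathcal A|+|\mathcal B|)=\sum_{i=1}^n\bigl(|\mathcal A(i)|+k|\mathcal B(-i)|\bigr)$, so it is enough to bound this sum by $k^2{n-1\choose k-1}$.

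Suppose first that $\mathcal B(-1)=\emptyset$. Then for $i\ge2$ Theorem \ref{thmcross}, applied on the $(n-1)$-element ground set $[n]\setminus\{i\}$ with set-size $k-1$ and $c=k$, gives $|\mathcal A(i)|+k|\mathcal B(-i)|\le(k+1){n-2\choose k-2}$; this is the larger of the two terms in (\ref{eqcross}) precisely because $n<k^2$. For $i=1$ we only have $|\mathcal A(1)|\le{n-1\choose k-1}$. Hence $\sum_{i=1}^n\bigl(|\mathcal A(i)|+k|\mathcal B(-i)|\bigr)\le{n-1\choose k-1}+(n-1)(k+1){n-2\choose k-2}=k^2{n-1\choose k-1}$, by a routine binomial identity; this is exactly the value realized by $\mathcal E(n+1,k,1)$, so in this case we are done with equality.

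Now suppose $\mathcal B(-1)\ne\emptyset$. A vector of $\mathcal B(-1)$ carries a $-1$ in position $1$, a $+1$ in position $n+1$ and $k-1$ further $+1$'s in $[2,n]$; for each zero-coordinate $j\in[2,n]$ of such a vector the $(1,j)$-shift belongs to $\mathcal F$ and is a member of $\mathcal B(-j)$ not containing the element $1$. Carrying this out over all of $\mathcal B(-1)$ --- which is itself a shifted intersecting family --- yields (this is Proposition \ref{prop4}) a large index set $I\subset[n]$ for which, for every $i\in I$, many members of $\mathcal B(-i)$ avoid element $1$, with a threshold growing in $|\mathcal B(-1)|$. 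Since $\mathcal B(-i)$ is shifted, element $1$ has maximum degree in it, so this threshold is a lower bound on $|\mathcal B(-i)|-d(\mathcal B(-i))$, and Corollary \ref{cordeg} then yields a non-trivial bound on $|\mathcal B(-i)|$, strictly below ${n-2\choose k-2}$, for every $i\in I$. Feeding this into Corollary \ref{thmcross3} bounds $|\mathcal A(i)|+k|\mathcal B(-i)|$ by a quantity strictly below $(k+1){n-2\choose k-2}$ for $i\in I$, while for the remaining indices, including $i=1$, we retain the plain bound $(k+1){n-2\choose k-2}$. Since $(k+1){n-2\choose k-2}-{n-1\choose k-1}=\frac{k^2-n}{k-1}{n-2\choose k-2}$, summing over all $i$ reduces the claim to showing that the total saving --- $|I|$ times the per-index gap obtained from Corollary \ref{thmcross3} --- is at least $\frac{k^2-n}{k-1}{n-2\choose k-2}$.

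The main obstacle is exactly this last inequality, together with the proof of Proposition \ref{prop4}. One must control how both $|I|$ and the per-index gap depend on $|\mathcal B(-1)|$ --- heuristically, the larger $\mathcal B(-1)$ is, the more numerous and the smaller the families $\mathcal B(-i)$, $i\in I$ --- and verify, using the explicit bounds supplied by Corollary \ref{cordeg} and the three branches of the maximum in (\ref{eqcross3}), that their product never drops below $\frac{k^2-n}{k-1}{n-2\choose k-2}$ throughout $3k-1\le n<k^2$. Extracting the set $I$ with the correct threshold from shiftedness, and inspecting a handful of boundary values of $n$ together with the special index $i=1$ directly, are the other delicate points; this is where the bulk of the calculations of Section \ref{sec6} will go. Note that at $n=k^2$ the required saving vanishes, which is consistent with the phase transition and with Theorem \ref{thmbig}.
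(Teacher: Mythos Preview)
Your outline is essentially the paper's own strategy: split into the case $\mathcal B(-1)=\emptyset$ (handled by Theorem~\ref{thmcross} alone) and $\mathcal B(-1)\ne\emptyset$ (handled via Proposition~\ref{prop4}, Corollary~\ref{cordeg}, and Corollary~\ref{thmcross3}), then average over $i$. Two points deserve a flag. First, your bookkeeping for $i=1$ differs from the paper's: you apply the plain bound $(k+1){n-2\choose k-2}$ at $i=1$ and then ask for a \emph{fixed} total saving $\frac{k^2-n}{k-1}{n-2\choose k-2}$, whereas the paper uses the trivial $|\mathcal A(1)|\le{n-1\choose k-1}$ and instead shows that the savings over $I$ absorb the \emph{variable} quantity $k|\mathcal B(-1)|$; this is what makes the case analysis (their (ii)--(iii), stratified by the size of $|\mathcal B(-1)|$) line up cleanly with the bounds coming from Corollary~\ref{cordeg}, since both sides scale with $|\mathcal B(-1)|$. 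Second, your plan silently fails at $k=3$: Corollary~\ref{cordeg} is stated for $4\le i\le k+1$, and after replacing $k$ by $k-1=2$ this range is empty, so no non-trivial bound on $|\mathcal B(-l)|$ is available from that route. The paper treats $k=3$ (only $n=8$ is in range) separately, proving directly that $\mathcal B(-1)$ is $2$-intersecting (Proposition~\ref{prop24}), hence $|\mathcal B(-1)|\le 1$, and finishing by hand. You should anticipate this ad hoc case rather than folding it into ``boundary values of $n$''.
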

Note that for $k=2$ we have $3k-1>k^2$, thus, there is nothing to prove in this case.
Before proving the theorem, we state and prove the following proposition:

\begin{prop}\label{prop4} Let $n\ge 3k-1$. In terms of Section~\ref{sec5}, consider a maximum intersecting family $\mathcal F\subset\mathcal V(n+1,k,1)$ and families of vectors $\mathcal A,$ $\mathcal B$ in $\{0,\pm 1\}^n$. There is a subset $I\subset[n]$, $|I|\ge \lceil \frac {3k}2\rceil$, such that for every $l\in I$ we have $|\mathcal B(\bar 1,-l)|\ge  \frac 13 |\mathcal B(-1)|.$
\end{prop}
\begin{proof} Take any $B\in \mathcal B(-1)$.  Then, due to the fact that $\mathcal F$ is shifted, if we swap -1, which is on the first coordinate position, and some of the 0's in $B$, we obtain a set $B'\in\mathcal B(\bar 1).$ Since there are $n-k$ zeros in each $B\in \mathcal B(-1)$, we may obtain $n-k$ sets $B'\in \mathcal B(\bar 1)$ out of each $B$. Moreover, any two sets obtained are different. Indeed, for  $B$ any two vectors obtained out of it have different positions of -1, while any two vectors obtained from different $B_1,B_2\in \mathcal B(-1)$  have different sets of 1's. Thus, $|\mathcal B(\bar 1,-2)|+\ldots+|\mathcal B(\bar 1,-n)|\ge (n-k)|\mathcal B(-1)|$. By pigeon-hole principle we get that one of the summands must be at least $\frac {n-k}{n-1} |\mathcal B(-1)|$. This is the first element from~$I$.

Once we have found the $i$-th element, which satisfies the inequality from the proposition, we add it to $I$, and delete this element from all the vectors from $\mathcal B(-1)$ and from the ground set. The set of already found elements we denote $I_{i}$. After $i$ steps each of the sets from the modified $\mathcal B(-1)$ has at least $n-k-i$ zeros, while the total number of coordinates is $n-i-1$. Therefore, the inequality from the previous paragraph, modified for this case, looks like $\sum_{j\in[2,n]\setminus I_{i}}|\mathcal B(\bar 1,-j)|\ge (n-k-i)|\mathcal B(-1)|,$ and by pigeon-hole principle we can find an element $l_i$ such that $|\mathcal B(\bar 1,-l_i)|\le \frac {n-k-i}{n-i-1} |\mathcal B(-1)|,$ which is bigger than $\frac 13|\mathcal B(-1)|$ for $i+1\le \lceil 3k/2\rceil$ and $n\ge 3k-1$.
\end{proof}
\begin{proof}[Proof of Theorem \ref{thmrec2}] We argue in terms used in Section \ref{sec5}. We consider several cases depending on the size of $\mathcal B(-1)$.\\

\textbf{(i)\ } First, assume that $\mathcal B(-1)$ is empty. Then, applying Theorem \ref{thmcross} with $c = k$, we get $|\mathcal A(i)|+k|\mathcal B(-i)|\le (k+1){n-2\choose k-2}$ for $i = 2,\ldots, n$ (see \eqref{eq5} and the calculations after it) and $|\mathcal A(1)|+k|\mathcal B(-1)| \le {n-1\choose k-1}$. Therefore,

\begin{equation*}\sum_{i=1}^n|\mathcal A(i)|+k|\mathcal B(-i)| = k(|\mathcal A|+|\mathcal B|)\le {n-1\choose k-1} + (n-1)(k+1){n-2\choose k-2}\end{equation*}
\begin{equation*}|\mathcal A|+|\mathcal B|\le \Bigl(\frac{n-1}{(k-1)k} + \frac{(n-1)(k+1)}k\Bigr){n-2\choose k-2} = \frac{(n-1)k}{k-1}{n-2\choose k-2} = e(n+1,k)-e(n,k). \end{equation*}

It is apparent from the calculations above that the theorem follows if we succeed to show that for some $I\subset[2,n]$ we have $\sum_{l\in I}\bigr(|\mathcal A(l)|+k|\mathcal B(-l)|\bigl)+k|\mathcal B(-1)|\le |I|(k+1){n-2\choose k-2}$. This is exactly what are we going to do in a range of cases. \\

\textbf{(ii)\ } Assume that $|\mathcal B(-1)|\ge 3{n-4\choose k-3}.$ Then, by Proposition \ref{prop4}, we can find $I\subset [2,n]$, $|I|\ge \lceil3k/2\rceil\ge 5$, such that we have $|\mathcal B(\bar 1, -l)|\ge  {n-4\choose k-3}$.
For any $l\in I$ consider the collection $\mathcal B(-l)$. Due to shifting, we know that the maximum degree $d(\mathcal B(-l))$ is equal to the number of sets from $\mathcal B(-l)$ that have 1 on the first coordinate position. Therefore, we know that $\gamma(\mathcal B(-l))=|\mathcal B(\bar 1, -l)| \ge {n-4\choose k-3}$ and, thus, we may apply Theorem~\ref{frdeg} to $\mathcal B(-l)$ with $i = 3$ and $n,k$ replaced by $n-1,k-1$. We obtain that

$$|\mathcal B(-l)|\le {n-2\choose k-2}-{n-4\choose k-2} +{n-4\choose k-3}.$$

For each $l\in I$ we apply Corollary~\ref{thmcross3} to $\mathcal B(-l),\mathcal A(-l)$ with $c = k$, $x={n-4\choose k-3}$ and $i=3$ and obtain that
\begin{equation*} |\mathcal A(l)|+k|\mathcal B(-l)|\le \max\Bigl\{f_{n-1,k-1}(3,k)+k{n-4\choose k-3},f_{n-1,k-1}(2,k), {n-1\choose k-1}\Bigr\}.
\end{equation*}
If  the maximum of the right hand side is attained on the third summand, then for a single $l\in I$
$$|\mathcal A(l)|+k|\mathcal B(-l)|+|\mathcal A(1)|+k|\mathcal B(-1)|\le {n-1\choose k-1}+(k+1){n-2\choose k-2}$$
and we are done. Recall that, by \eqref{eqf}, $f_{n-1,k-1}(2,k)=(k+1){n-2\choose k-2}-(k-1){n-3\choose k-2}$. Note also that $|\mathcal B(-1)|\le {n-2\choose k-2}$ since $\mathcal B(-1)$ is intersecting. If the maximum is attained on the second expression, then, since $|I|\ge 5$, for five different $l_1,\ldots,l_5$  we have
$$\sum_{j=1}^5(|\mathcal A(l_j)|+k|\mathcal B(-l_j)|)+k|\mathcal B(-1)|- (5k+5){n-2\choose k-2} \le k{n-2\choose k-2}-5(k-1){n-3\choose k-2}\le$$$$\le (2k-5(k-1)){n-3\choose k-2}< 0,$$
since $k\ge 3$, and we are done. Recall that, by \eqref{eqf}, $f_{n-1,k-1}(3,k)=(k+1){n-2\choose k-2}-k{n-4\choose k-2}+{n-4\choose k-3}$. Finally, if the maximum  is attained on the first summand, then
\begin{align}\frac 1{|I|}&\Bigl(\sum_{l\in I} (|\mathcal A(l)|+ k|\mathcal B(-l)|)+k|\mathcal B(-1)|\Bigr)-(k+1){n-2\choose k-2}\le\notag \\\label{eq9} \le&-k{n-4\choose k-2}+ (k+1){n-4\choose k-3}+\frac k{|I|}{n-2\choose k-2}.\end{align}
Before continuing, we need the following two bounds, valid for $n\ge 3k-1$:

 \begin{equation*}\frac{(k+1){n-4\choose k-3}}{k{n-4\choose k-2}} = \frac{(k+1)(k-2)}{k(n-k-1)}\le \frac 12. \end{equation*}
 $${n-2\choose k-2} = \frac{(n-2)(n-3)}{(n-k)(n-k-1)}{n-4\choose k-2}\le \frac {9}4{n-4\choose k-2}.$$
Using these three bounds and $|I|\ge $, we get
\begin{align*}(\ref{eq9})\le &-\frac 12 k{n-4\choose k-2}+ \frac {9}4\cdot\frac k{5}{n-4\choose k-2}< 0.
\end{align*}
This case is settled.\\

\textbf{(iii)\ } Assume that for some $4\le i \le k$ we have $$3{n-i-1\choose k-i}\le|\mathcal B(-1)|\le 3{n-i\choose k-i+1}.$$
Similarly to the previous case, for $I\subset [n], |I| \ge 5$, we obtain
$$|\mathcal B(-l)|\le {n-2\choose k-2} - {n-i-1\choose k-2}+{n-i-1\choose k-i}.$$
Note that if $0<|\mathcal B(-1)|<3{n-k-1\choose k-k}=3$ we have $\mathcal B(\bar 1,-l)\ge \frac 13|\mathcal B(-1)|>0$ and we get the same bound on $|\mathcal B(-l)|$ as the one above for $i=k$, so this case is also covered and cases (i), (ii), (iii) altogether cover all possible values of $|\mathcal B(-1)|$.

 We again apply Corollary \ref{thmcross3} for each $\mathcal B(-l),\mathcal A(-l)$ with $x = {n-i-1\choose k-i}$. If the maximum of the expression  from the inequality (\ref{eqcross3}) is attained on one of the last two expressions, we are done, since we can do exactly the same as in case (ii). Recall that $f_{n-1,k-1}(i,k)=(k+1){n-2\choose k-2}-k{n-i-1\choose k-2}+{n-i-1\choose k-i}$. If it is attained on the first one, then

\begin{align}\frac 1{|I|}&\Bigl(\sum_{l\in I} (|\mathcal A(l)|+ k|\mathcal B(-l)|)+k|\mathcal B(-1)|\Bigr)-(k+1){n-2\choose k-2}\le\notag \\\le &-k{n-i-1\choose k-2}+(k+1){n-i-1\choose k-i}+\frac k{|I|}|\mathcal B(-1)|\le \notag \\\le \label{eq10}
&-k{n-i-1\choose k-2}+(k+1){n-i-1\choose k-i}+\frac {3k}{5}{n-i\choose k-i+1}.\end{align}
To proceed further, we need the following bounds, valid for $n\ge 3k-1$:

\begin{equation*}\frac{(k+1){n-i-1\choose k-i}}{k{n-i-1\choose k-2}} = \frac{k+1}k\prod_{j=2}^{i-1}\frac{k-j}{n-k-j+1}\le \Bigl(\frac 12\Bigr)^{i-2}\le \frac 14.\end{equation*}
$$\frac{{n-i\choose k-i+1}}{{n-i-1\choose k-2}}\le \frac{(n-i)\prod_{j=2}^{i-2}(k-j)}{\prod_{j=1}^{i-2}(n-k-j)}\le \frac{n-4}{n-k-1}\Big(\frac{k-2}{n-k-2}\Big)^{i-3}\le \frac{3k-5}{2k-2}\cdot\frac{1}{2}\le \frac 34.$$
Now we may conclude.
$$(\ref{eq10})\le \Bigl(-\frac 34 k+ \frac {9k}{20}\Bigr){n-i-1\choose k-2}< 0.$$
\end{proof}

\section{Acknowledgements}
The idea to study $m(n,k,l)$, as well as several questions concerning its values for particular values of $n$ and $l$ were communicated to us by A. Raigorodskii. Some of them are answered in this paper. We also thank the anonymous referee for helpful suggestions.



\end{document}